\documentclass[12pt,fleqn]{amsart}

\usepackage{macros}

\begin{document}

\title{Proper maps of annuli}
\author[Abdullah Al Helal]{Abdullah Al Helal\,\orcidlink{0000-0002-4609-4511}}
\address{Department of Mathematics, Oklahoma State University, Stillwater, OK 74078-5061}
\email{ahelal@okstate.edu}

\author[Ji{\v r}{\' i} Lebl]{Ji{\v r}{\' i} Lebl\,\orcidlink{0000-0002-9320-0823}}
\address{Department of Mathematics, Oklahoma State University, Stillwater, OK 74078-5061}
\email{lebl@okstate.edu}
\thanks{The second author was in part supported by Simons Foundation collaboration grant 710294.}

\author[Achinta Kumar Nandi]{Achinta Kumar Nandi\,\orcidlink{0009-0006-4129-3649}}
\address{Department of Mathematics, University of California San Diego, La Jolla, CA 92093-0112}
\email{anandi@ucsd.edu}

\date{February 4, 2026}

\subjclass[2020]{32H35, 32A08, 32H02}
\keywords{rational sphere maps, proper holomorphic mappings, maps of annuli, Segre varieties,
affine dimension}

\begin{abstract}
We study proper holomorphic maps of annuli in complex Euclidean spaces,
that is, domains with $U(n)$ as the automorphism group.
By the Hartogs phenomenon and a result of Forstneri\v{c}, such maps are always
rational and extend to proper maps of balls.
We first prove that a proper map of annuli from $n$ dimensions to $N$ dimensions
where $N < \binom{n+1}{2}$ is always an affine embedding.  This inequality
is sharp as the homogeneous map of degree 2 satisfies $N=\binom{n+1}{2}$.
Next we find a necessary and sufficient condition for a map to be
homogeneous: A proper map of annuli is homogeneous if and only if
its general hyperplane rank, the affine dimension of the image of
a general hyperplane, is exactly $N-1$.
As a corollary, we obtain a classification of homogeneous
proper maps of balls.
A homogeneous proper ball map takes all spheres centered at the origin
to spheres centered at the origin.
We show that if a proper ball map has
general hyperplane rank $N-1$ and takes one sphere centered
at the origin to a sphere centered at the origin, then it is homogeneous.
Another corollary of this result is a complete
classification of proper maps of annuli from dimension 2 to dimension 3.
Finally, we give a complete normal form of rational proper maps of annuli
of degree 2.
\end{abstract}

\maketitle


\allowdisplaybreaks

\section{Introduction}

One of the fundamental questions in complex analysis is to understand proper
holomorphic maps $f \colon U \to V$ between two domains $U \subset \C^n$
and $V \subset \C^N$.  In general, few if any such maps exist unless $U$
and $V$ are related.  If $U$ and $V$ are the same type of domains
and have many symmetries, then many such maps exist in general.
The domain with the ``largest'' automorphism group is the unit ball
$\B_n \subset \C^n$ and the space of proper maps of balls $f \colon \B_n \to
\B_N$ has a rich structure and has been extensively studied.
We will focus on proper maps of annuli, as annuli are domains with a large
automorphism group.
Write
$\A_{n,r} = \B_n \setminus \overline{r\B_n} = \{ z \in \C^n : r < \norm{z}
< 1 \}$ for an annulus, where
we will always assume that $0 < r < 1$.
The automorphism group of $\A_{n,r}$ is the unitary group
as long as $n > 1$.  When $n=1$ there are also the automorphisms $e^{i\theta}
\frac{r}{z}$.
Notice that unlike the ball, the annulus is non-pseudoconvex; its boundary
consists of a
strongly pseudoconvex part and a strongly pseudoconcave part.

It is not difficult to show that if $n > 1$,
then proper holomorphic maps $f \colon \A_{n,r}
\to \A_{N,R}$ extend to proper maps of balls that also take the
$r$-sphere to the $R$-sphere.
Their study therefore involves proper maps of balls.
Our goal is to prove results for annuli analogous to the known results for
balls and to contrast where these results fundamentally differ.
We therefore very briefly discuss some of the known results for balls.

Consider then a proper holomorphic map $f \colon \B_n \to \B_N$.
If $n=N=1$, then by a classical theorem of Fatou~\cite{fatou-1923-fonctions}, 
$f$ is a finite Blaschke
product and hence rational.  If $n = N > 1$, then by a theorem of
Alexander~\cite{alexander-1977-proper}, $f$ is an automorphism of the unit ball.
Dor~\cite{dor-1990-proper} and L\o w~\cite{low-1985-embeddings} show that if $N > n > 1$, 
there exist nonrational
maps, even in the case of codimension 1, that is, $N= n+1$.
If $n > 1$ and the map is sufficiently smooth up to the boundary, then by
a theorem of Forstneri\v{c}~\cite{forstneric-1989-extending}, 
all such $f$ are rational with the degree 
bounded by an integer depending on $n$ and $N$.
Faran~\cite{faran-1982-maps} showed that
proper maps of $\B_2$ to $\B_3$ that are
$C^3$ up to the boundary are \emph{spherically equivalent} (up to
composition with automorphisms of the balls) to one of four polynomial maps.
For $n \geq 3$,
by a series of papers of Webster~\cite{webster-1979-mapping},
Faran~\cite{faran-1986-linearity}, and Huang~\cite{huang-1999-linearity},
every proper map from $\B_n$ to $\B_N$ for $N \leq 2n-2$ that is
sufficiently smooth up to the boundary is spherically equivalent 
to the linear embedding.
For $N=2n-1$, the Whitney map $(z',z_n) \mapsto (z',z_n \otimes z)$
is not spherically equivalent to the linear embedding.
Interestingly, this map has no analogue for annuli.
In general, see the books by 
D'Angelo~\cites{dangelo-1993-several,dangelo-2019-hermitian,dangelo-2021-rational}
for a more thorough treatment of proper maps of balls.

Let us move to maps of annuli.
When $n=1$, the same classical argument to compute the automorphism group of $\A_{1,r}$
extends to show that a proper holomorphic map
$f \colon \A_{1,r} \to \A_{1,R}$ exists only
if $R=r^d$ for some $d \in \N$ in which case $f(z)=e^{i\theta} z^d$ or
$f(z)=r^d e^{i\theta} z^{-d}$.
As we said before, if $n > 1$, annulus maps extend to ball maps.
By Forstneri\v{c}, the maps must be rational.
See \cref{theorem:annulimapsrational}
(proved in \cite{helal-2025-proper}) for more details.
So when $n > 1$, holomorphic proper maps of annuli are in one-to-one
correspondence with rational proper maps of balls that take a sphere
centered at zero to another sphere centered at zero.

A natural map that is a proper map for both balls and
annuli is the homogeneous map $z^{\otimes d}$
(the $d$th tensor power).
That is because if $\norm{z}^2$ is constant, then
\begin{equation}
\norm{z^{\otimes d}}^2 = 
\norm{z}^{2d}
\end{equation}
is constant.
This map can be symmetrized (as $z^{\otimes d}$ contains some duplicate
entries) via a unitary map to what we will call $H_d$ direct sum a
number of zero components.  In particular,
$\norm{H_d(z)}^2 = \norm{z^{\otimes d}}^2$.
Rudin~\cite{rudin-1984-homogeneous} proved that every homogeneous
proper ball map is, up to a unitary, $H_d \oplus 0$.  
See D'Angelo~\cites{dangelo-1988-polynomial,dangelo-2003-proper,dangelo-1993-several} for a much simpler
proof of this fact and many more details and results about the homogeneous map.

Hence, $H_d$ is a map that takes any sphere of radius $r$ centered at the
origin to a sphere of radius $r^d$ centered at the origin, and hence
takes
$\A_{n,r}$ (for any $0 < r < 1$) properly to $\A_{N,R}$, where $N = \binom{n+d-1}{d}$ is
the number of components of $H_d$ and $R=r^d$.
We can now construct new maps by what we will call \emph{juxtaposition}.
Given two proper annulus maps with the same source,
$F \colon \A_{n,r} \to \A_{N_1,R_1}$
and
$G \colon \A_{n,r} \to \A_{N_2,R_2}$
and $t \in (0,1)$,
then $\sqrt{1-t}\, F \oplus \sqrt{t}\, G \colon \A_{n,r} \to \A_{N,R}$ is a proper map where
$N = N_1 + N_2$ and $R^2 = (1-t) R_1^2+t R_2^2$.
Based on the one-dimensional result, noting that $z^{\otimes d}$ is the
correct multidimensional analogue of $z^d$, one could (incorrectly)
conjecture that all such proper maps are juxtapositions of homogeneous
maps.
However, other maps do exist, see
\cref{sec:seconddeg}
and more generally \cite{helal-2025-proper}.

Our goal is to classify proper holomorphic
maps $f \colon \A_{n,r} \to \A_{N,R}$.
We say
$f \colon \A_{n,r} \to \A_{N,R}$
and
$g \colon \A_{n,r} \to \A_{N,R}$
are \emph{unitarily equivalent} (or simply \emph{equivalent}) if there
exist unitary matrices $V \in U(n)$ and $W \in U(N)$ so that
\begin{equation}
f = WgV .
\end{equation}
As we noted, $U(n)$ is the holomorphic automorphism group of $\A_{n,r}$.
If $n=N > 1$, then a proper map $f \colon \A_{n,r} \to \A_{n,R}$
exists only if $r=R$ and $f$ is unitarily equivalent to the identity $z$,
see \cref{corollary:equidimdeg1}.
If $N > n$, then an affine embedding
exists whenever $r \leq R < 1$, but is not necessarily $z \oplus 0$
due to the different automorphism group.
In particular,
if $f \colon \A_{n,r} \to \A_{N,R}$ is a proper rational map of degree 1,
then $f$ is unitarily equivalent to the affine embedding
\begin{equation} \label{eq:linearembed}
z \mapsto \sqrt{\frac{1-R^2}{1-r^2}} \, z \oplus
\sqrt{\frac{R^2-r^2}{1-r^2}} \oplus 0 ,
\end{equation}
where the last $0$ has $N-n-1$ terms.
That is, we shift the linear
embedding by adding the proper constant in the $(n+1)$th component.
Allowing for constants in juxtaposition,
this map is a juxtaposition of the identity $z$
and the constant map $1$.
See \cref{prop:linearembeddings}.

As we noted above, for sufficiently smooth proper maps between balls,
$f \colon \B_n \to \B_N$ no maps other than the linear embedding
exist for $N < 2n-1$, and then when $N=2n-1$, the Whitney map exists
and is not spherically equivalent to a linear embedding.
Curiously, for annulus maps, this gap is much longer and goes until
$N= \binom{n+1}{2}$.

\begin{theorem}
\label[theorem]{thm:gapthm1}
Suppose $f \colon \A_{n,r} \to \A_{N,R}$ is a proper holomorphic map
and $2 \leq n < N < \binom{n+1}{2}$.
Then $r \leq R < 1$ and $f$ is unitarily equivalent to an affine embedding
\eqref{eq:linearembed}.
The bound on $N$ is sharp as the homogeneous map $H_2$
takes the annulus $\A_{n,r}$ properly to $\A_{\binom{n+1}{2},r^2}$.
\end{theorem}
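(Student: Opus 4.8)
My plan is to work with the homogeneous expansion of $f$ and to squeeze everything out of the two boundary conditions that properness forces. By \cref{theorem:annulimapsrational} the map is rational, and after clearing denominators (equivalently, by running the bookkeeping below for $\norm{p}^2$ and $|q|^2$, where $f=p/q$) it suffices to treat the polynomial case; write $f=\sum_{j=0}^{d}f_j$ with $f_j$ homogeneous of degree $j$, and set $s=\norm{z}^2$ and $g_j=\norm{f_j}^2$. Properness means $f$ carries each boundary sphere of $\A_{n,r}$ to a boundary sphere of $\A_{N,R}$, so that $\norm{f}^2-1$ is divisible by $s-1$ and $\norm{f}^2-R^2$ is divisible by $s-r^2$ in the ring of real polynomials. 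Comparing bidegrees $(j,k)$ in $z,\bar z$ turns each divisibility into a recursion, and restricting to the diagonal $(j,j)$ — where only the norms $g_j$ appear, since $\langle f_j,f_k\rangle$ has bidegree exactly $(j,k)$ — and solving yields the two master identities
\[ \sum_{i=0}^{d}s^{d-i}g_i=s^{d},\qquad \sum_{i=0}^{d}r^{2i}s^{d-i}g_i=R^{2}s^{d}. \]

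The engine of the proof is the case $d=2$. There $g_0=\norm{f_0}^2$ is a constant, so the two identities form a linear system in the forms $g_1,g_2$ whose solution is forced: one gets $g_1=\mu s$ and, most importantly, $g_2=\lambda s^2$, that is $\norm{f_2(z)}^2=\lambda\norm{z}^4$ for a constant $\lambda$, with $\lambda\ge 0$ since $g_2\ge 0$. Now $\norm{z}^4=\sum_{|\alpha|=2}\tfrac{2!}{\alpha!}|z^\alpha|^2$ is, as a Hermitian form in the degree-two monomials, diagonal with positive entries, hence of rank exactly $\binom{n+1}{2}$. Therefore if $\lambda>0$ the components of $f_2$ must span a space of dimension $\binom{n+1}{2}$ (this is the Rudin--D'Angelo normalization $f_2\cong\sqrt{\lambda}\,H_2\oplus 0$), forcing $N\ge\binom{n+1}{2}$ and contradicting the hypothesis. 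Hence $\lambda=0$ and $f_2\equiv 0$. This is precisely the mechanism that makes the bound sharp: $\norm{z}^4$ needs $\binom{n+1}{2}$ orthogonal squares, and $H_2$ realizes them.

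It remains to kill $f_j$ for all $j\ge 2$ in general, and this is where I expect the real difficulty. The natural route is a descending induction on the top degree $d$: show $\norm{f_d(z)}^2=\lambda\norm{z}^{2d}$, so that $\lambda>0$ would force $N\ge\binom{n+d-1}{d}\ge\binom{n+1}{2}$ (the rank of $\norm{z}^{2d}$ on degree-$d$ monomials, using that this is increasing in $d$ for $d\ge 2$, $n\ge 2$), a contradiction; thus $f_d\equiv 0$ and the degree drops. The main obstacle is that for $d\ge 3$ the two diagonal master identities alone underdetermine $g_d$, being only two relations among $g_0,\dots,g_d$. To close this I would bring in the off-diagonal identities produced by the same two divisibilities, namely $\sum_i s^{d-m-i}\langle f_{m+i},f_i\rangle=0$ and $\sum_i r^{2i}s^{d-m-i}\langle f_{m+i},f_i\rangle=0$ for each $m\ge 1$ — which already give $\langle f_d,f_0\rangle=\langle f_d,f_1\rangle=0$ — and combine them with positivity of the $g_j$ to propagate the divisibility of $\norm{f_d}^2$ by successive powers of $s$ up to $\norm{f_d}^2=\lambda s^d$. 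An alternative I would run in parallel is to restrict $f$ to linear hyperplanes through the origin, each giving a proper map $\A_{n-1,r}\to\A_{N,R}$, and to induct on $n$ together with a nondegeneracy reduction of the target; here the sticking point is exactly the range $\binom{n}{2}\le N<\binom{n+1}{2}$, which a single hyperplane restriction does not remove.

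Once every $f_j$ with $j\ge 2$ vanishes, $f$ has degree $1$, and \cref{prop:linearembeddings} identifies it, up to unitary equivalence, with the affine embedding \eqref{eq:linearembed} and yields $r\le R<1$. The sharpness is then immediate from the computation above: $H_2$ satisfies $\norm{H_2(z)}^2=\norm{z}^4$, so it carries $\A_{n,r}$ properly to $\A_{\binom{n+1}{2},r^2}$, saturating the bound on $N$.
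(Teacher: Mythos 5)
Your two master identities are correct, and your $d=2$ engine is complete for \emph{polynomial} maps: the diagonal identities do force $\norm{f_2}^2=\lambda\norm{z}^4$, and the rank count then gives $N\geq\binom{n+1}{2}$. But the proof stops there, and the two steps you defer are where the theorem actually lives. First, the reduction to the polynomial case is not the ``equivalent bookkeeping'' you assert: for $f=p/q$ the two divisibility conditions couple the bigraded pieces of $\norm{p}^2$ with those of $\abs{q}^2$, and nothing a priori excludes low-degree rational annulus maps with nonconstant denominator (the paper remarks at the end of \cref{sec:basic} that quadratic \emph{rational} maps taking two zero-centered spheres to zero-centered spheres exist). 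In the paper, polynomiality is a \emph{conclusion}: one writes
\begin{equation*}
\norm{p(z)}^2=\Bigl(b_0+b_1\bigl(\norm{z}^2-1\bigr)\Bigr)\abs{q(z)}^2+Q_2(z,\bar z)\bigl(\norm{z}^2-1\bigr)\bigl(\norm{z}^2-r^2\bigr),
\end{equation*}
and only after showing $Q_2\equiv 0$ does one get $q\equiv 1$ and degree $1$.

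Second, and fatally for your main route: the structural claim you plan to prove for $d\geq 3$, namely $\norm{f_d}^2=\lambda\norm{z}^{2d}$, is \emph{false} for proper maps of annuli, so no propagation of off-diagonal identities and positivity from the two sphere conditions alone can establish it. The paper's own \cref{example:nonhomog} is a degree-$3$ polynomial proper map of annuli whose top-degree part is $f_3=\tfrac{2}{\sqrt{5}}\bigl(z_1^3,\sqrt{2}\,z_1^2z_2,z_1z_2^2\bigr)$, so that $\norm{f_3}^2=\tfrac{4}{5}\abs{z_1}^2\norm{z}^4$, which is divisible by $\norm{z}^4$ but is not $\lambda\norm{z}^6$. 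Consequently the hypothesis $N<\binom{n+1}{2}$ must enter the argument globally, across all degrees at once, and your sketch has no mechanism for that (your hyperplane-restriction alternative stalls exactly where you say it does). The paper's substitute is \cref{thm:hermitian}: if $Q_2\not\equiv 0$, then $\rank Q_2(z,\bar z)\bigl(\norm{z}^2-1\bigr)\bigl(\norm{z}^2-r^2\bigr)\geq\binom{n+2}{2}$, a Pfister-type rank bound (the new ingredient being a ``diagonal'' lemma proved via Descartes' rule of signs and monomial counting, valid because $-1$ and $-r^2$ have the same sign); rank subadditivity applied to the displayed decomposition then yields $N=N_f=\rank\norm{p}^2\geq\binom{n+1}{2}$ in one stroke, with no induction on the degree and no homogeneous expansion. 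Your $d=2$ computation is a correct special case of this, but closing your gap would essentially require reproving that rank theorem.
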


As homogeneous maps play a significant role in the study of proper maps of
annuli, we next find a property that determines a map to be homogeneous.
We define an invariant $k_f$ that has appeared
in many guises in the study of ball maps.
Let $f \colon U \subset \C^n \to \C^N$ be any map.  If $X \subset \C^N$,
write $\aff(X)$ for the affine hull of $X$, that is, the smallest
affine subspace of $\C^N$ containing $X$.
Define
\begin{equation}
k_f \overset{\text{def}}{=}
\max
\left\{
\dim \aff\bigl(f(H \cap U)\bigr) :
H \subset \C^n \text{ is an affine hyperplane}
\right\} .
\end{equation}
We call $k_f$ the \emph{general hyperplane rank}.  One could think of $k_f$
as a certain measure of complexity.
For a holomorphic map $f$, $k_f$ is attained on a generic hyperplane,
and it is in general quite easy to compute, especially for rational maps.
While for an arbitrary holomorphic map one generally expects $k_f = N$,
via a Segre-variety argument, a rational proper map of balls (or annuli)
has $k_f \leq N-1$.
This fact has been used many times in the study of ball maps.
The integer $k_f$ is also related to the $X$-variety.
See \cref{sec:kf} for more on this connection and the history.
A related concept is the embedding dimension $N_f$,
that is, the dimension of the smallest affine subspace $S$ containing
$f(\A_{n,r})$ (or $f(\B_n)$).
In other words, $N_f = \dim \aff f(\A_{n,r})$.
In either case, restricting the codomain to $S$
obtains a proper map between annuli (or balls) with target
dimension $N_f$.  Note that it is not always true
that $k_f = N_f-1$.
If $f$ is a juxtaposition of two rational maps, then $k_f \leq N_f-2$, see
\cref{proposition:juxtaposekf}.
What we prove is that for proper maps of annuli, $k_f = N_f-1$ is
precisely the condition that characterizes homogeneous maps $H_d$.

\begin{theorem} \label[theorem]{thm:kfN1impliesHd}
Suppose $n \geq 2$ and $f \colon \A_{n,r} \to \A_{N,R}$ is a
proper holomorphic map.  Then
$k_f = N-1$
if and only if
$N=\binom{n + d - 1}{d}$, $R=r^d$, and
\begin{equation}
f = U H_d ,
\end{equation}
for some $d \in \N$ and a unitary $U \in U(N)$.
\end{theorem}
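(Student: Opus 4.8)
The plan is to prove the two directions separately, with the ``if'' direction a short computation and the ``only if'' direction carrying all the weight.

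For the ``if'' direction, suppose $f = UH_d$. The unitary $U$ changes neither $k_f$ nor the embedding dimension, and since $H_d$ is, up to a diagonal rescaling of coordinates, the degree-$d$ Veronese map $v_d \colon z \mapsto (z^\alpha)_{|\alpha|=d}$, a linear change of coordinates on the target gives $k_f = k_{v_d}$. The Segre-variety bound already recorded gives $k_f \le N-1$, so it suffices to exhibit one affine hyperplane on which the image has affine dimension $N-1$. I would take $H = \{z_1 = c\}$ with $0 < \abs{c} < 1$, which meets $\A_{n,r}$ in a nonempty open set; on $H$ the monomial $z^\alpha$ with $\abs{\alpha}=d$ restricts to $c^{\alpha_1} z_2^{\alpha_2}\cdots z_n^{\alpha_n}$, and as $\alpha$ ranges over all exponents of degree $d$ these span, since $c\neq 0$, the full space of polynomials of degree $\le d$ in $z_2,\dots,z_n$. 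That space has dimension $\binom{n+d-1}{d}=N$ and contains the constants, so the affine hull of $v_d(H)$ has dimension $N-1$; because the components are polynomials, this value is already attained on the open set $H\cap\A_{n,r}$. Thus $k_f = N-1$, while $N=\binom{n+d-1}{d}$ and $R=r^d$ follow at once from $\norm{H_d(z)}^2=\norm{z}^{2d}$.

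For the ``only if'' direction, I would first pass, using \cref{theorem:annulimapsrational} and the correspondence of \cite{helal-2025-proper}, to the rational proper ball map $F=p/q$ extending $f$, which sends the spheres $\norm{z}=1$ and $\norm{z}=r$ to the origin-centered spheres of radii $1$ and $R$. Polarizing the two boundary identities gives, with $s=\langle z,\zeta\rangle$, the relations $\langle p(z),p(\zeta)\rangle = q(z)\overline{q(\zeta)}$ on $\{s=1\}$ and $\langle p(z),p(\zeta)\rangle = R^2 q(z)\overline{q(\zeta)}$ on $\{s=r^2\}$. Fixing $\zeta$, the first already shows that $F$ maps the affine hyperplane $H_\zeta=\{z:\langle z,\zeta\rangle=1\}$ into the target hyperplane $\{w:\langle w,p(\zeta)\rangle=\overline{q(\zeta)}\}$, which reproves $k_f\le N-1$; applied inside the embedding subspace it gives $k_f\le N_f-1\le N-1$, so the hypothesis $k_f=N-1$ forces $N_f=N$ and $k_f=N_f-1$.

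The heart of the argument, and the step I expect to be the main obstacle, is to show that a proper annulus map with $k_f=N_f-1$ is homogeneous. The property I aim for is that $\norm{F(z)}^2$ depends only on $\norm{z}^2$: the two polarized identities prescribe $\langle p(z),p(\zeta)\rangle$ and $q(z)\overline{q(\zeta)}$ along the Segre varieties $\{s=1\}$ and $\{s=r^2\}$, while $k_f=N_f-1$ says that the generic hyperplane image is nondegenerate in its target hyperplane, i.e.\ the Gauss-type correspondence $\zeta\mapsto\bigl(p(\zeta):q(\zeta)\bigr)$ (a projective form of $F$ itself) is as nondegenerate as possible. I would use this maximal nondegeneracy, together with the Hermitian symmetry of $\langle p(z),p(\zeta)\rangle$ and its prescribed behavior on the two Segre varieties, to force its bihomogeneous expansion to collapse to a single pure power $\langle z,\zeta\rangle^{d}$ with $q$ constant; equivalently, that $F$ sends every origin-centered sphere to an origin-centered sphere. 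The difficulty is precisely in converting the generic, first-order information in $k_f$ into this global bihomogeneous rigidity, and this is where the two-sphere (annulus) hypothesis is essential: the nonhomogeneous \cref{example:nonhomog} shows that the boundary conditions alone, without the rank hypothesis, do not suffice. Once $\norm{F(z)}^2=\phi(\norm{z}^2)$ is in hand, expanding $F=\sum_k F^{(k)}$ into homogeneous parts forces $\langle F^{(j)}(z),F^{(k)}(z)\rangle\equiv 0$ for $j\neq k$ and $\norm{F^{(k)}(z)}^2=a_k\norm{z}^{2k}$; finiteness of the target leaves only finitely many nonzero $a_k$, so $F$ is, up to a unitary, a juxtaposition $\bigoplus_k\sqrt{a_k}\,H_k$. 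By \cref{proposition:juxtaposekf} a juxtaposition of two or more pieces has $k_f\le N_f-2$, contradicting $k_f=N_f-1$, so a single degree $d$ occurs with $a_d=\phi(1)=1$, giving $F=UH_d$, $N=N_f=\binom{n+d-1}{d}$, and $R=r^d$. Alternatively, once homogeneity is established one may invoke Rudin's theorem \cite{rudin-1984-homogeneous} to get $U(H_d\oplus 0)$ and then discard the zero block using $N_f=N$.
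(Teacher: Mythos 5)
Your ``if'' direction is correct and matches the paper (the hyperplane $\{z_1=c\}$ computation is exactly \cref{prop:kfHd}), and your closing step---once one knows $f$ is, up to a unitary, a juxtaposition of scaled homogeneous maps, ruling out two or more pieces via \cref{proposition:juxtaposekf} or a rank count---also parallels the paper, which argues $k_f = N-\ell$ for $\ell$ distinct degrees and concludes $\ell=1$. But the heart of the ``only if'' direction, which you yourself flag as ``the main obstacle,'' is left unproved: you say you \emph{would} use the maximal nondegeneracy together with the two Segre-variety conditions ``to force the bihomogeneous expansion to collapse to a single pure power,'' but you supply no mechanism for converting the hypothesis $k_f = N-1$ into that collapse. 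As written this is a plan with a hole at its center, and it is precisely where all the work lies.

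For comparison, the paper closes this hole not by proving bihomogeneous rigidity directly but via \cref{thm:lft} and \cref{lemma:Nm1spherestospheres}. Homogenizing $f$ to $F$ on $\C^{n+1}$, the two sphere conditions give $F(\Sigma_w^{J_n})\subset\Sigma_{F(w)}^{J_N}$ and $F(\Sigma_w^{A})\subset\Sigma_{F(w)}^{B}$, where $A$, $B$ encode the $r$- and $R$-spheres; writing $\Sigma_w^A=\Sigma_{Sw}^{J_n}$ with $S=J_nA$, the hypothesis $k_f=N-1$ enters at exactly one point, but decisively: the span of the image of an $n$-dimensional subspace is $N$-dimensional, so the hyperplane containing it is \emph{unique}, forcing $\Sigma_{F(Sw)}^{J_N}=\Sigma_{F(w)}^{B}$ and hence the functional equation $F(Sw)=\lambda TF(w)$ with $T=J_NB$, i.e., $f\circ\alpha=\beta\circ f$ where in affine coordinates $\alpha,\beta$ are the scalings by $1/r^2$ and $1/R^2$. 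Iterating yields $f\bigl(r^{2k}S^{2n-1}\bigr)\subset R^{2k}S^{2N-1}$ for all $k$, so infinitely many origin-centered spheres go to origin-centered spheres, and at that point the paper invokes the classification from \cite{helal-2025-proper} of rational maps with this property as juxtapositions $U(c_1H_{d_1}\oplus\cdots\oplus c_\ell H_{d_\ell}\oplus 0)$. So the missing idea in your proposal is this uniqueness-of-the-containing-hyperplane step and the resulting intertwining and iteration; moreover, your route of establishing $\norm{F(z)}^2=\phi(\norm{z}^2)$ from scratch would also require reproving the juxtaposition classification that the paper gets by citation. Until the collapse step is actually carried out, the proof is incomplete.
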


One direction of this theorem is obvious.
The remarkable direction is that $k_f = N -1$
implies that $f$ is homogeneous.
The proof relies on a result from
\cite{helal-2025-proper}, where 
the authors showed that a rational map that takes
infinitely many spheres
centered at the origin to spheres centered at the origin is a juxtaposition
of homogeneous maps.
To state the theorem in terms of $N_f$,
we ought to say that
$k_f = N_f-1$ if and only if $f = U(H_d \oplus 0)$.
Note that $k_f = N-1$ implies $N=N_f$ for any proper rational map
of annuli (or balls).

Let us state a version of the forward direction of this result for
proper maps of balls.
Clearly $H_d$ is a proper ball map that takes infinitely many spheres centered
at the origin to spheres centered at the origin.
With the assumption that $k_f = N-1$, to show that $f$
is homogeneous, it is sufficient
to check that a map takes just
one inner sphere centered at the origin to a sphere centered at the origin.

\begin{theorem}
Suppose $n \geq 2$ and $f \colon \B_n \to \B_N$ is a proper
holomorphic
map such that $k_f = N-1$
and $f(rS^{2n-1}) \subset RS^{2N-1}$ for some $r,R \in (0,1)$.
Then there exists a unitary map $U$ and an integer $d$ such that
$f = U H_d$.
\end{theorem}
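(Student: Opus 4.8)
The plan is to deduce the statement from \cref{thm:kfN1impliesHd} by transferring the hypotheses from the ball to the annulus. First I would invoke the correspondence between rational proper ball maps sending a centered sphere to a centered sphere and proper maps of annuli (from \cite{helal-2025-proper}; cf.\ the discussion around \cref{theorem:annulimapsrational}). Since $f$ is a rational proper ball map, it is holomorphic on $\B_n$, satisfies $\norm{f(z)}\to 1$ as $\norm{z}\to 1$, and by hypothesis $\norm{f(z)}=R$ on $\norm{z}=r$. Thus the inner and outer boundary spheres of $\A_{n,r}$ map to the inner and outer boundary spheres of $\A_{N,R}$, and the correspondence yields that the restriction $f|_{\A_{n,r}}$ is a proper holomorphic map $\A_{n,r}\to\A_{N,R}$.

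I would flag this first step as the one genuinely nontrivial input: that sending a single centered sphere to a centered sphere already forces $f$ to map the open annulus properly into the open annulus. This cannot be obtained merely from a maximum-principle argument for the plurisubharmonic function $\log\norm{f}$, which controls $\norm{f}$ from above but satisfies no minimum principle; ruling out interior values of $\norm{f}$ below $R$ is exactly what the cited correspondence provides.

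Next I would verify that the general hyperplane rank is unchanged in passing from $\B_n$ to $\A_{n,r}$. Every affine hyperplane $H$ meeting $\B_n$ also meets $\A_{n,r}$, because the slice $H\cap\B_n$ always contains points of norm arbitrarily close to $1$; moreover $H\cap\A_{n,r}$ is a nonempty open subset of the connected open set $H\cap\B_n$, on which $f$ is holomorphic. By the identity theorem, an affine functional annihilates $f$ on $H\cap\A_{n,r}$ exactly when it annihilates $f$ on $H\cap\B_n$, whence $\aff\bigl(f(H\cap\A_{n,r})\bigr)=\aff\bigl(f(H\cap\B_n)\bigr)$ for every such $H$. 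Taking the maximum over $H$ gives $k_f=N-1$ for $f|_{\A_{n,r}}$ as well.

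Finally I would apply \cref{thm:kfN1impliesHd} to the proper annulus map $f|_{\A_{n,r}}$: the condition $k_f=N-1$ yields $N=\binom{n+d-1}{d}$, $R=r^d$, and $f|_{\A_{n,r}}=UH_d$ for some $d\in\N$ and a unitary $U\in U(N)$. Since $f$ and $UH_d$ are rational maps agreeing on the nonempty open set $\A_{n,r}$, they agree on all of $\B_n$, so $f=UH_d$. The only labor beyond this bookkeeping is the correspondence and the $k_f$ comparison; the essential content, that $k_f=N-1$ forces homogeneity, is already carried by \cref{thm:kfN1impliesHd}.
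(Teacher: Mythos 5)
Your proposal is correct, and there is no circularity: the paper's proof of \cref{thm:kfN1impliesHd} rests on \cref{theorem:annulimapsrational}, \cref{thm:lft}, \cref{lemma:Nm1spherestospheres}, and the classification from \cite{helal-2025-proper}, none of which invoke the ball statement you are proving. But your route differs from the paper's. The paper proves the ball theorem directly: a rational proper ball map takes $S^{2n-1}$ to $S^{2N-1}$ (via Cima--Suffridge \cite{cima-1990-boundary}), so together with the hypothesis $f(rS^{2n-1}) \subset RS^{2N-1}$ and $k_f = N-1$, the map satisfies verbatim the hypotheses of \cref{lemma:Nm1spherestospheres}; the Segre-variety argument of \cref{thm:lft} then produces the intertwining $f \circ \alpha = \beta \circ f$ with $\alpha$, $\beta$ scalings, whose iteration yields infinitely many concentric spheres mapped to concentric spheres, whereupon the classification in \cite{helal-2025-proper} gives $f = U(c_1 H_{d_1} \oplus \cdots \oplus c_\ell H_{d_\ell} \oplus 0)$ and the count $k_f = N - \ell$ forces $\ell = 1$. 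You instead reduce to \cref{thm:kfN1impliesHd} as a black box by restricting $f$ to the annulus, which requires two pieces of bookkeeping the paper's direct route avoids: the properness of $f|_{\A_{n,r}}$, which, as you rightly flag, cannot come from the maximum principle alone and needs the outside-to-outside result of \cite{helal-2025-proper} (the ``if'' direction of the correspondence corollary in \cref{sec:basic}), and the preservation of $k_f$ under restriction, which your identity-theorem argument on the connected slices $H \cap \B_n$ handles correctly (every hyperplane meeting $\B_n$ does meet $\A_{n,r}$, since the slice's relative boundary lies on $S^{2n-1}$). What your reduction buys is economy, reusing the annulus theorem wholesale; what the paper's route buys is independence from the correspondence machinery, since \cref{lemma:Nm1spherestospheres} applies to the rational map on $\C^n$ without ever needing $f$ to map the open annulus into the open annulus. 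Ultimately both proofs rest on the same two pillars, the Segre-variety propagation of spheres and the classification of maps preserving infinitely many concentric spheres; yours simply reaches them through \cref{thm:kfN1impliesHd} rather than directly.
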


To prove the theorem, we note that we have a rational map taking two
distinct spheres to two distinct spheres.  The condition
$k_f=N-1$ is then used to show that $f$
must in fact take infinitely many spheres to spheres.

As a corollary to \cref{thm:kfN1impliesHd} and to the result
of Faran classifying proper maps from $\B_2$ to $\B_3$,
we completely classify proper holomorphic maps from $\A_{2,r}$ to
$\A_{3,R}$.

\begin{theorem} \label[theorem]{theorem:farananalogue}
Suppose $f \colon \A_{2,r} \to \A_{3,R}$ is a proper holomorphic map.
Then $f$ is unitarily equivalent to exactly one of the following two maps:
\begin{enumerate}
\item
The affine embedding
\begin{equation}
(z_1,z_2) \mapsto
\left(
\sqrt{\frac{1-R^2}{1-r^2}} \, z_1 ,
\sqrt{\frac{1-R^2}{1-r^2}} \, z_2 ,
\sqrt{\frac{R^2-r^2}{1-r^2}} \right),
\end{equation}
where $R \geq r$.  Note that $N_f = 2$ and $k_f = 1$.
\item
The homogeneous map $H_2$
\begin{equation}
(z_1,z_2) \mapsto
\left(
z_1^2 ,
\sqrt{2} \, z_1 z_2 ,
z_2^2 \right),
\end{equation}
where $R = r^2$.  Note that $N_f = 3$ and $k_f = 2$.
\end{enumerate}
\end{theorem}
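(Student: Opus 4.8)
The plan is to reduce the statement to one about proper ball maps and then combine Faran's classification with \cref{thm:kfN1impliesHd}. By \cref{theorem:annulimapsrational}, $f$ is rational and extends to a proper holomorphic map $F \colon \B_2 \to \B_3$ which, being an annulus map, carries the inner sphere $rS^3$ to a sphere $RS^5$ centered at the origin (and the unit sphere to the unit sphere). By Faran's theorem \cite{faran-1982-maps}, $F$ is spherically equivalent to exactly one of the four maps $M_1 = (z_1,z_2,0)$, $M_2 = (z_1, z_1 z_2, z_2^2)$, $M_3 = H_2 = (z_1^2, \sqrt{2}\, z_1 z_2, z_2^2)$, and $M_4 = (z_1^3, \sqrt{3}\, z_1 z_2, z_2^3)$.

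The first fact I would record is that the general hyperplane rank $k_f$ is a spherical invariant. Automorphisms of $\B_2$ and $\B_3$ are linear fractional and extend to projective linear automorphisms of the ambient projective spaces; such maps send affine hyperplanes to affine hyperplanes and preserve the affine dimension of the (projective span of a) bounded set. Hence pre- and post-composition of $f$ by automorphisms leaves $k_f$ unchanged, so $k_f = k_{M_j}$ for whichever $M_j$ is spherically equivalent to $F$. A direct computation gives $k_{M_1} = 1$, whereas $M_2, M_3, M_4$ are nonlinear and the image of a generic complex line is a nondegenerate planar curve, so $k_{M_2} = k_{M_3} = k_{M_4} = 2$ (recall $k \le N-1 = 2$ for every rational proper ball map). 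Thus $k_f \in \{1,2\}$, and this value separates the linear embedding from the other three maps.

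With the dichotomy in hand the classification is immediate. If $k_f = 2 = N-1$, then \cref{thm:kfN1impliesHd} forces $f = U H_d$ with $N = \binom{d+1}{d} = d+1 = 3$, hence $d = 2$, $R = r^2$, and $f$ is unitarily equivalent to $H_2$; this is case (2), and incidentally it shows $F \sim M_3$, so neither $M_2$ nor $M_4$ admits a sphere-to-sphere representative. If $k_f = 1$, then $F$ is spherically equivalent to the linear embedding $M_1$, hence $F$ is a composition of projective linear maps and is itself linear fractional, i.e.\ of degree $1$; \cref{prop:linearembeddings} then shows $f$ is unitarily equivalent to the affine embedding \eqref{eq:linearembed}, which for $n=2$, $N=3$ is precisely case (1), with $r \le R < 1$. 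Since $k_f$ is a unitary invariant equal to $1$ in case (1) and to $2$ in case (2), the two normal forms are inequivalent, which gives the ``exactly one'' assertion.

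The step I expect to carry the real weight is the spherical invariance of $k_f$, since everything downstream depends on transporting $k_f$ across Faran's equivalence; I would state and prove this as a short lemma, being slightly careful that the generic hyperplane computing $k_f$ avoids the measure-zero configurations in which the projective closure meets the hyperplane at infinity degenerately. The second point deserving care is the observation that it is \cref{thm:kfN1impliesHd}, and not Faran alone, that excludes the Whitney map $M_2$ and the degree-three map $M_4$: Faran by itself only places a $k_f = 2$ map into one of three spherical classes, and it is the sphere-to-sphere hypothesis, funneled through \cref{thm:kfN1impliesHd}, that collapses these three possibilities to $H_2$.
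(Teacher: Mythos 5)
Your proposal is correct and follows essentially the same route as the paper: extend to a proper ball map, invoke Faran's classification, use the invariance of $k_f$ under spherical equivalence to separate the linear embedding ($k_f=1$) from the three nonlinear normal forms ($k_f=2$), and then let \cref{thm:kfN1impliesHd} collapse the $k_f=2$ case to $H_2$ while \cref{prop:linearembeddings} handles the degree-one case. The only cosmetic difference is that you organize the dichotomy by the value of $k_f$ where the paper splits by degree, but the ingredients and their roles are identical, including your closing observation that it is \cref{thm:kfN1impliesHd}, not Faran alone, that rules out the Whitney and degree-three maps.
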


It is also interesting to classify maps by degree.  Degree one maps are
unitaries possibly composed with an affine embedding if the target
dimension is larger than the embedding dimension.  We completely
classify degree two maps in \cref{sec:seconddeg}.  There exist other degree
two maps than the homogeneous map $H_2$ or a juxtaposition of $H_2$ and $H_1$ and/or $H_0$.
For proper ball maps,
all degree two maps are spherically equivalent to monomial maps
by the work of the second author~\cite{lebl-2011-normal}.
For annuli maps where the automorphism group is the unitary,
the conclusion is more elaborate.
For every $r$ and $R$ with
$r^2 < R < 1$, there exists a one-parameter family of rational proper maps from
$\A_{n,r}$ to $\A_{N,R}$, where $N_f = N = \binom{n+1}{2}+n$ and these maps
are not equivalent to a polynomial map.

Let us give the organization of this paper.  In \cref{sec:basic,sec:kf}, we state
and prove the basic results for maps of annuli
and the general hyperplane rank $k_f$.
In \cref{sec:gap},
we prove the first gap theorem, that is, \cref{thm:gapthm1}.
In \cref{sec:homog}, we prove the classification of homogeneous maps
in terms of $k_f$, that is, \cref{thm:kfN1impliesHd}.  We also
prove the classification of annuli maps in dimension 2 and 3, that is,
\cref{theorem:farananalogue}.
In \cref{sec:seconddeg}, we give a complete classification of second degree rational annuli maps.
Finally, in \cref{sec:remarks}, we give some further remarks and open
questions.

The authors would like to acknowledge John D'Angelo for many insightful
comments on this work, and the second author in particular would like to
acknowledge D'Angelo for introducing him to the problems surrounding proper
maps of balls and for the many years of extremely fruitful discussions on
related topics.  The authors would also like to acknowledge Dusty Grundmeier
for useful suggestions on this work.


\section{Basic results on proper maps of annuli}\label[section]{sec:basic}

Let us start with proving some of the basic statements we made
in the introduction.  First, we have the classical result about proper maps
of annuli in one dimension.

\begin{proposition}
Suppose $f \colon \A_{1,r} \to \A_{1,R}$ is a proper holomorphic
map.  Then there is some $d \in \N$ and $\theta \in \R$ so
that $R=r^d$ and $f(z) = e^{i\theta} z^d$ or $f(z) = r^d e^{i\theta} z^{-d}$.
\end{proposition}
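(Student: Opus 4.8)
The plan is to classify proper holomorphic maps of one-dimensional annuli by lifting to the universal cover and analyzing how the map interacts with the two boundary circles. An annulus $\A_{1,r} = \{r < |z| < 1\}$ is conformally a cylinder, and proper maps between annuli are exactly those that extend continuously to the closure and map boundary to boundary. So first I would establish that a proper $f \colon \A_{1,r} \to \A_{1,R}$ extends to the boundary (this is standard for proper maps of planar domains with nice boundary, via the reflection principle or a normal-families/cluster-set argument) and that it maps each boundary circle of the source into a boundary circle of the target. By connectedness of each circle and continuity, $f$ sends the outer circle $\{|z|=1\}$ entirely into one of $\{|w|=1\}$ or $\{|w|=R\}$, and likewise the inner circle $\{|z|=r\}$; properness forces these to be the two \emph{different} target circles.

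The main tool I would use is the harmonic function $u(z) = \log|f(z)|$, which is harmonic on $\A_{1,r}$ (assuming $f$ is nonvanishing, which holds since the image avoids $0$) with boundary values $\log 1 = 0$ and $\log R$ on the two circles. Comparing against the harmonic function $\frac{\log|z|}{\log r}\log R$, which has the same boundary values, the maximum principle forces
\begin{equation}
\log|f(z)| = \frac{\log R}{\log r}\,\log|z| .
\end{equation}
Setting $d = \frac{\log R}{\log r}$, this says $|f(z)| = |z|^d$ on the annulus. To see that $d$ must be a \emph{positive integer}, I would invoke the argument principle: as $z$ traverses the outer circle once, the winding number of $f$ around $0$ must be an integer, and that winding number equals $\pm d$ by the boundary behavior. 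This simultaneously yields $R = r^d$ with $d \in \N$ and pins down the degree.

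With $|f(z)| = |z|^d$ in hand, I would consider the two cases by whether $f$ maps the outer circle to the outer circle or to the inner one. In the \textbf{orientation-preserving} case, the function $g(z) = f(z)/z^d$ is holomorphic on the annulus (after checking $f$ has a zero of order exactly $d$ at the puncture-filling, or more cleanly working on the cover) with $|g| = 1$ on both boundary circles; by the maximum and minimum principles $g$ is a unimodular constant $e^{i\theta}$, giving $f(z) = e^{i\theta}z^d$. In the \textbf{orientation-reversing} case (outer circle to inner circle), the same argument applied to $g(z) = z^d f(z)/r^d$ or equivalently $f(z) = r^d e^{i\theta} z^{-d}$ yields the second form. \textbf{The hard part} is the boundary-regularity step and justifying that $g$ is holomorphic and single-valued across the whole annulus despite the $z^{-d}$ or the branch issues; the cleanest route is to pass to the logarithmic (strip/cylinder) cover where $f$ lifts to an affine map of the strip, making both the integrality of $d$ and the rigidity of $g$ transparent, and then descend. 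Everything else is a routine application of the maximum principle.
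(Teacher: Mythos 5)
Your proof is correct and takes essentially the same route as the paper's sketch: both compare the harmonic function $\log|f(z)|$ against the suitably scaled $\log|z|$ via the maximum principle to get $|f(z)|=|z|^{d}$ with $d=\log R/\log r$, and then conclude $f=e^{i\theta}z^{d}$ (or the inverted form). You fill in details the paper elides --- integrality of $d$ via the winding number, and rigidity of $g=f/z^{d}$ using that $g$ is zero-free so the minimum modulus principle applies --- and you treat the outer-to-inner case directly rather than first reducing it by the inversion $z\mapsto r/z$ as the paper does, but the core argument is identical.
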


\begin{proof}[Sketch of proof]
Let us sketch the proof as it may not be easy to find, but
it is almost identical to classifying biholomorphisms of annuli.
After a possible inversion and scaling,
we assume that $\abs{f(z)}$ goes to 1 as we approach the outer circle
and it goes to $R$ as we approach the inner circle.
The map $z \mapsto \frac{\log \abs{f(z)}}{\log R}$ is
continuous on $\overline{\A_{1,r}}$, harmonic inside and is 0
on the outer circle and 1 on the inner circle.  Hence it
is equal to $\frac{\log \abs{z}}{\log r}$,
and we get $\abs{f(z)} = \abs{z}^{\frac{\log R}{\log r}}$.
So $f(z) = e^{i\theta} z^d$ and $R=r^d$ for some $d \in \N$.
\end{proof}

In \cite{helal-2025-proper}, the authors have proved, using the theorem of Forstneri\v{c}
\cite{forstneric-1989-extending} as a key ingredient, that all annulus maps are rational when $n
\geq 2$.  For completeness we reproduce it here.

\begin{theorem}[\cite{helal-2025-proper}] \label[theorem]{theorem:annulimapsrational}
Suppose $n \geq 2$ and $f \colon \A_{n,r} \to \A_{N,R}$ is a proper holomorphic map.
Then $f$ is a rational map of degree bounded
by some expression $D(n,N)$.  Moreover, $f$ extends to a holomorphic map on
$\B_n$ and gives a proper map of $\B_n$ to $\B_N$ and also a proper map of
$r\B_n$ to $R\B_N$.
\end{theorem}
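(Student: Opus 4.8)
The plan is to extend $f$ across the hole $r\B_n$ by the Hartogs phenomenon, use the maximum principle for plurisubharmonic functions to pin down the boundary behavior of the extension, and then invoke Forstneri\v{c}'s rationality theorem applied to the \emph{inner} ball, where the relevant boundary is automatically real-analytic. Since $n \geq 2$ and $\A_{n,r} = \B_n \setminus \overline{r\B_n}$ is the complement in $\B_n$ of the compact set $\overline{r\B_n}$ with connected complement, the Hartogs extension theorem applies to each component $f_1,\dots,f_N$ and yields a holomorphic extension $\tilde f \colon \B_n \to \C^N$. Write $u = \norm{\tilde f}^2$, a continuous plurisubharmonic function on $\B_n$ with $R^2 < u < 1$ on the annulus. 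Since every point of the outer boundary $\partial\B_n$ is approached through the annulus where $u < 1$, the maximum principle gives $u \leq 1$ on $\B_n$, and the strong maximum principle then forces $u < 1$ throughout (an interior maximum would make $u \equiv 1$, impossible since $u < 1$ on the annulus). Hence $\tilde f(\B_n) \subseteq \B_N$, and $\tilde f$ is non-constant because $f$ is proper.

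Next I would determine the boundary correspondence. The inner sphere $\{\norm{z}=r\}$ lies in the \emph{interior} of $\B_n$, so $\tilde f$ is holomorphic in a neighborhood of it and $u$ is continuous there. For each point $p$ on this sphere, properness of $f$ forces $u(p) = \lim_{z \to p} u(z) \in \{R^2,1\}$, since sequences in the annulus tending to $p$ leave every compact subset of $\A_{n,r}$ and so their images tend to $\partial\A_{N,R}$. As the inner sphere is connected, $u$ is constant on it, equal to $R^2$ or $1$. The value $1$ is impossible: it would realize the maximum of $u$ at an interior point, forcing $u \equiv 1$ and $\tilde f$ constant. Thus $u \equiv R^2$ on $\{\norm{z}=r\}$, and the maximum principle on $r\B_n$ gives $u < R^2$ inside. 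Therefore $\tilde f \colon r\B_n \to R\B_N$ is a proper holomorphic map sending the inner sphere to the inner target sphere.

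The crux is the observation that the boundary of this inner ball map is the sphere $\{\norm{z}=r\}$, which sits inside $\B_n$, so $\tilde f$ is holomorphic, in particular real-analytic, across it with no regularity hypothesis left to verify. I can therefore apply Forstneri\v{c}'s theorem \cite{forstneric-1989-extending} directly to $\tilde f \colon r\B_n \to R\B_N$ and conclude that $\tilde f$ is rational with degree bounded by the expression $D(n,N)$ supplied by that theorem. Rationality is a global property, so $\tilde f$ is now known on a neighborhood of $\overline{\B_n}$ away from a proper subvariety of poles; restricting to the connected outer sphere minus those poles, $u$ is again constant and equal to $R^2$ or $1$. The value $R^2$ on $\partial\B_n$ would contradict the maximum principle since $u > R^2$ throughout the annulus, so $u \to 1$ at the outer boundary and $\tilde f \colon \B_n \to \B_N$ is proper as well.

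The main obstacle is the boundary correspondence together with securing the regularity needed for Forstneri\v{c}'s theorem; both are resolved by exploiting that the inner sphere is an interior sphere of $\B_n$, across which the Hartogs extension is automatically holomorphic, so that the inner proper ball map inherits analytic boundary regularity for free. Everything else is a bookkeeping exercise with the maximum principle, and the degree bound $D(n,N)$ is simply inherited from Forstneri\v{c}'s estimate.
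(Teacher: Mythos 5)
Your proposal is correct and follows essentially the same route as the paper's sketch: extend through the hole by the Hartogs phenomenon, trap the image in $\B_N$ by the maximum principle, show the inner sphere goes to the inner sphere, and apply Forstneri\v{c}'s theorem to the restriction $r\B_n \to R\B_N$, where holomorphy across the boundary sphere makes the regularity hypothesis automatic. The only minor divergence is that where the paper justifies the boundary correspondence by noting that a proper map must send the pseudoconcave (resp.\ pseudoconvex) part of the boundary to the pseudoconcave (resp.\ pseudoconvex) part, you substitute an elementary continuity--connectedness--maximum-principle argument on the spheres, which works equally well.
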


Unlike in the ball case, no hypothesis on the boundary regularity
is needed due to the pseudoconcavity of the inner boundary.
The best expression for $D(n,N)$ is not known, though D'Angelo has
conjectured~\cite{dangelo-2003-sharp} that 
for ball maps, it is $\frac{N-1}{n-1}$ if $n \geq 3$ and $2N-3$ if $n=2$.  For annulus maps, we know that the $D$ must be smaller.  For example, by our result for $n=2$ and $N=3$, we see that $D(2,3)=2$ in the annulus case, while Faran shows that in the ball case the degree bound is $3$.
We conjecture that for annulus maps in dimension $n \geq 2$ and degree $d$, we have $N \geq \binom{n+d-1}{d}$.

\begin{proof}[Sketch of proof]
As $n \geq 2$, the map $f$ extends holomorphically through the inner ball
via Hartogs phenomenon and the image is contained in $\B_N$ by the maximum
principle.  It must take the inner sphere to the inner sphere
as it takes the pseudoconcave part of the boundary of the source
to the pseudoconcave part of the boundary of the target.  By the theorem of
Forstneri\v{c},
$f$ restricted to the inner ball
$r\B_n$ is a proper rational map to $R\B_N$ and gives the degree bound.
That $f$ is a proper map of $\B_n$ to $\B_N$ follows since we must take the
pseudoconvex part of the boundary to the pseudoconvex part.
\end{proof}

Via a theorem of Cima--Suffridge~\cite{cima-1990-boundary},
a rational proper
map of balls has no poles on the spheres, so we get a sphere map.
Conversely, a rational map taking a sphere to a sphere is a proper ball map 
via maximum principle.  In \cite{helal-2025-proper}, it was proved that a
rational sphere map also takes the outside of the sphere to the outside of
the sphere.  Putting all these together one gets the following equivalence.

\begin{corollary}
Suppose $n \geq 2$.
An 
$f \colon \A_{n,r} \to \A_{N,R}$ is a proper holomorphic map
if and only if $f$ is the restriction of a rational map
$f \colon \C^n \dashrightarrow \C^n$ with no poles on
the spheres $rS^{2n-1}$ nor $S^{2n-1}$ such that
$f(rS^{2n-1}) \subset RS^{2N-1}$ and
$f(S^{2n-1}) \subset S^{2N-1}$.
\end{corollary}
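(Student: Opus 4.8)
The plan is to prove the two implications separately by assembling the three facts recalled in the paragraph above: the Cima--Suffridge theorem, the maximum-principle converse, and the outside-to-outside property of rational sphere maps from \cite{helal-2025-proper}.

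For the forward direction, suppose $f \colon \A_{n,r} \to \A_{N,R}$ is proper and holomorphic. First I would invoke \cref{theorem:annulimapsrational} to conclude that $f$ is rational and restricts both to a proper map of $\B_n$ to $\B_N$ and to a proper map of $r\B_n$ to $R\B_N$. The Cima--Suffridge theorem applied to the outer proper ball map shows $f$ has no poles on $S^{2n-1}$, while properness of that ball map forces $f(S^{2n-1}) \subset S^{2N-1}$; rescaling and repeating for the inner proper ball map gives no poles on $rS^{2n-1}$ and $f(rS^{2n-1}) \subset RS^{2N-1}$. This direction is essentially immediate once \cref{theorem:annulimapsrational} is in hand.

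For the reverse direction, suppose $f$ is rational with no poles on $rS^{2n-1}$ or $S^{2n-1}$ and with $f(rS^{2n-1}) \subset RS^{2N-1}$ and $f(S^{2n-1}) \subset S^{2N-1}$. Using the maximum-principle converse, the condition $f(S^{2n-1}) \subset S^{2N-1}$ makes $f$ a proper map of $\B_n$ to $\B_N$; in particular $f$ is holomorphic on all of $\B_n$ (so it has no poles on the annulus) and $\norm{f(z)} < 1$ whenever $\norm{z} < 1$, which supplies the upper bound on the annulus. For the lower bound I would apply the outside-to-outside result of \cite{helal-2025-proper} to the inner sphere: viewing $f$ as a rational sphere map sending $rS^{2n-1}$ into $RS^{2N-1}$, it sends $\{\norm{z} > r\}$ into $\{\norm{w} > R\}$, so $\norm{f(z)} > R$ whenever $\norm{z} > r$. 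Combining the two estimates gives $R < \norm{f(z)} < 1$ for every $z \in \A_{n,r}$, that is, $f(\A_{n,r}) \subset \A_{N,R}$.

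Properness then follows from boundary behavior: since $f$ has no poles on either sphere it extends continuously to $\overline{\A_{n,r}}$, and as $z$ approaches $S^{2n-1}$ (respectively $rS^{2n-1}$) we have $\norm{f(z)} \to 1$ (respectively $\norm{f(z)} \to R$), so preimages of compact subsets of $\A_{N,R}$ are bounded away from both boundary spheres and hence compact. The one step I expect to be the genuine obstacle is the lower bound $\norm{f(z)} > R$ on the annulus: the inner boundary is pseudoconcave, so the maximum principle is unavailable there, and this is exactly where the outside-to-outside property from \cite{helal-2025-proper} is essential rather than cosmetic.
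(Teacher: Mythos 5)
Your proposal is correct and takes essentially the same route as the paper, which assembles this corollary from exactly the three ingredients you use: \cref{theorem:annulimapsrational} together with the Cima--Suffridge theorem for the forward direction, and the maximum-principle converse plus the outside-to-outside property of rational sphere maps from \cite{helal-2025-proper} for the reverse direction. You also correctly single out the lower bound $\norm{f(z)} > R$ across the pseudoconcave inner boundary as the step where the outside-to-outside result is genuinely needed, which is precisely why the paper cites it.
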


If $n=N=1$ the rationality still holds (but not the bound for the degree) as
we just showed above.  However, if $N > n=1$, then nonrational maps exist.

\begin{example}
Next, let us show that a nonrational proper map exists from $\A_{1,r}$ to
$\A_{2,R}$ for some $R$.  First, we construct a nonrational proper map from
the disc $\D$ to the 2-ball $\B_2$.
Such a map follows from the result of Dor~\cite{dor-1990-proper}, but it is useful
to consider an elementary construction in the one-dimensional case.
Take an arbitrary nonrational
function from the disc $f_1 \colon \D \to \D$, suppose that
it extends continuously to the closed disc $\overline{\D}$ but does not
extend holomorphically past any point on $\partial \D$ and suppose that
$\sup_{z\in\overline{\D}} \abs{f_1(z)} < 1$.  Find a real-valued
harmonic function $u$ on $\D$ continuous on $\overline{\D}$ so that
on $\partial \D$ we have
$u(z) = \frac{1}{2} \log \left(1-\abs{f_1(z)}^2\right)$.
Find its harmonic conjugate $v \colon \D \to \R$ and write
$f_2 = e^{u+iv}$.
Then the map $f = f_1 \oplus f_2$ is a proper map from $\D$
to $\B_2$ because $\abs{f_1(z)}^2+\abs{f_2(z)}^2 = \abs{f_1(z)}^2+e^{2u} \to 1$
as $\abs{z} \to 1$.
If $f_1$ is not rational, then
$f$ is not spherically equivalent to a rational map.

The map $f$ is at most points locally an embedding.  We can compose with
an automorphism on the source and target and assume that $f(0)=0$,
$f'(0)\not= 0$, and that for some neighborhood $U \subset \D$ of $0$
and a neighborhood $V \subset \B_2$ of $0$, $f(U) \subset V$ is a properly
embedded submanifold, $f|_U$ is one-to-one, and $f(\D \setminus U) \cap V =
\emptyset$.  Assume that $V$ is a ball.
Consider a closed ball $B$ centered at the origin that is small enough
so that
$\overline{B} \subset V$ and so that $K = f^{-1}(B)$ is connected.
The set $\D \setminus K$ is biholomorphic to some $\A_{1,r}$ for some $r$
via some $\varphi \colon \A_{1,r} \to \D \setminus K$.
Then $f \circ \varphi$ is a proper map of $\A_{1,r}$ to $\B_2 \setminus B =
\A_{2,R}$ (where $R$ is the radius of $B$).

The set $\D \setminus K$ has real-analytic boundary and so $\varphi$
extends past the boundary.
Therefore,
$f \circ \varphi$ cannot extend past the boundary at any point
on $\partial \D$
by the assumption on $f_1$ and so $f \circ \varphi$ is not rational.
\end{example}

Let us compare the effect of the automorphism groups
for proper holomorphic maps
$f \colon \A_{n,r} \to \A_{N,R}$ versus
$f \colon \B_n \to \B_N$.  The automorphism group of $\B_n$, $\Aut(\B_n)$,
is the set of linear fractional transformations of the form
\begin{equation} \label{eq:autBn}
U \phi_a(z) = 
U \frac{a-L_a z}{1-\langle z,a\rangle}
\qquad
L_a z = \left(1-\sqrt{1-\norm{a}^2}\right)
\frac{\langle z,a\rangle}{\norm{a}^2} a 
+
\sqrt{1-\norm{a}^2} z.
\end{equation}
For any affine subspace $L$ of dimension $m < n$ such that $L \cap \B_n \not=
\emptyset$, there exists a $\varphi \in \Aut(\B_n)$ so that $\varphi(L)$
is given by $z_{m+1} = \cdots = z_n = 0$.

The automorphism group of $\A_{n,r}$ is the unitary
group.  In particular, given an affine subspace $L$ of dimension $m < n$ that intersects both
the outer ball $\B_n$ and the inner ball $r\B_n$, elementary geometry says
that there is a $\varphi \in \Aut(\A_{n,r}) = U(n)$ so that $\varphi(L)$ is given by
$z_{m+1} = c$ and $z_{m+2} = \cdots = z_n = 0$ where $0 \leq c < r$.

The effect of this difference on classification up to equivalence with
respect to the embedding dimension $N_f$ is the following.
For a proper ball map $f \colon \B_n \to
\B_N$ where $N_f < N$, there exists an automorphism $\psi \in \Aut(\B_N)$
so that $\psi \circ f = F \oplus 0$ where $F \colon \B_n \to
\B_{N_f}$ is a proper map ($N_f = N_F$).  The corresponding result for annulus maps is
somewhat different, as an automorphism of the annulus is a unitary and
cannot take an affine space to a linear space; it can only ``rotate'' it.
The proof is an elementary computation.

\begin{proposition} \label[proposition]{prop:Nfsimplify}
Suppose $f \colon \A_{n,r} \to \A_{N,R}$ is a proper map where $N_f < N$.
Then there exists a unitary $U \in U(N)$
and a unique $c \in [0,R)$ such that
\begin{equation}
U f = \sqrt{1-c^2}\, F \oplus c \oplus 0 ,
\end{equation}
where
$F \colon \A_{n,r} \to \A_{N_f,\sqrt{\frac{R^2-c^2}{1-c^2}}}$ is a proper map
and $N_f = N_F$.
\pagebreak[2]
\end{proposition}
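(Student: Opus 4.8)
The plan is to read off the constant $c$ and the unitary $U$ directly from the geometry of the image's affine hull. Set $S = \aff f(\A_{n,r})$, a complex affine subspace of $\C^N$ of dimension $N_f < N$. I would write $S = p + V$, where $V$ is the $N_f$-dimensional linear subspace of directions of $S$ and $p$ is the nearest point of $S$ to the origin, so that $p \perp V$ and $c := \norm{p} = \operatorname{dist}(0,S) \ge 0$. The orthogonality $p \perp V$ is the structural fact that lets a single unitary simultaneously straighten both $V$ and the offset $p$.

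First I would build the unitary. Pick an orthonormal basis $v_1, \dots, v_{N_f}$ of $V$, take $v_{N_f+1}$ to be a unit vector orthogonal to $V$ with $p = c\, v_{N_f+1}$ (any unit vector orthogonal to $V$ if $c = 0$), and extend to an orthonormal basis $v_1, \dots, v_N$ of $\C^N$. Let $U \in U(N)$ be determined by $U v_j = e_j$. Then $U(S) = c\, e_{N_f+1} + \operatorname{span}(e_1,\dots,e_{N_f})$, whose points are exactly the vectors $(x_1,\dots,x_{N_f},c,0,\dots,0)$. Since $Uf(\A_{n,r}) \subset U(S)$, the map $Uf$ has $(N_f+1)$-st component identically $c$, components $N_f+2,\dots,N$ identically zero, and I would name its first $N_f$ components $h$, setting $F = \tfrac{1}{\sqrt{1-c^2}}\,h$.

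Next I would verify the assertions by norm bookkeeping, using that $U$ preserves norms. For $z \in \A_{n,r}$ we have $\norm{f(z)}^2 = \norm{Uf(z)}^2 = \norm{h(z)}^2 + c^2 = (1-c^2)\norm{F(z)}^2 + c^2$, so the condition $R^2 < \norm{f(z)}^2 < 1$ is equivalent to $\tfrac{R^2-c^2}{1-c^2} < \norm{F(z)}^2 < 1$; since $\norm{F}$ is a strictly increasing function of $\norm{f}$ and $f$ is proper, $F$ is a proper map to $\A_{N_f,R'}$ with $R' = \sqrt{\tfrac{R^2-c^2}{1-c^2}}$. For $N_F = N_f$ I would use that $\aff(Uf(\A_{n,r})) = U(S)$ (affine hull commutes with the bijective affine map $U$) and that the coordinate projection $\pi$ onto the first $N_f$ coordinates is affine with $\pi(U(S)) = \C^{N_f}$, so $\aff F(\A_{n,r}) = \tfrac{1}{\sqrt{1-c^2}}\,\pi(U(S)) = \C^{N_f}$.

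Finally, the range $c \in [0,R)$ and uniqueness. Every image point has norm exceeding $R$, so $c = \operatorname{dist}(0,S) \le R$. The one delicate point is strictness: if $c = R$, then $S$ meets the sphere $RS^{2N-1}$ only in the single foot point $p$, while by \cref{theorem:annulimapsrational} the map $f$ extends to a map continuous up to $rS^{2n-1}$ that carries that sphere into $RS^{2N-1} \cap S = \{p\}$; a holomorphic map with constant boundary values is constant by the maximum principle, contradicting properness. Hence $c < R$. Uniqueness is then automatic, since $c = \operatorname{dist}\bigl(0,\aff f(\A_{n,r})\bigr)$ is unitarily invariant and any decomposition of the stated form realizes exactly this distance. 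I expect ruling out $c = R$ to be the only step requiring more than linear algebra and norm arithmetic; everything else is forced once the affine hull is rotated into normal position.
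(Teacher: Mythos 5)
Your proof is correct and is essentially the paper's intended argument: the paper gives no written proof beyond calling it ``an elementary computation,'' with the preceding discussion indicating exactly your normal-position approach---a unitary rotating $S=\aff f(\A_{n,r})$ onto $\{z_{N_f+1}=c,\ z_{N_f+2}=\cdots=z_N=0\}$, where $c$ is the distance from the origin to $S$, followed by the norm bookkeeping $\norm{f}^2=(1-c^2)\norm{F}^2+c^2$. Your explicit exclusion of $c=R$ via the holomorphic extension through the inner ball and the maximum principle fills in a detail the paper leaves implicit, and it is handled correctly.
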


In the proposition, the $0$ in the direct sum has $N-N_f-1$ components (so it
can be left out if $N=N_f+1$).

To classify annulus maps the following division lemma is very useful.
If $f = \frac{p}{q}$ is a rational proper annulus map, then it takes
the unit sphere to the unit sphere and
the $r$-sphere to the $R$-sphere.
Namely,
$\norm{p(z)}^2-\abs{q(z)}^2$ is divisible by $\norm{z}^2-1$ and
$\norm{p(z)}^2-R^2\abs{q(z)}^2$ is divisible by $\norm{z}^2-r^2$.
The following lemma encapsulates both of these conditions.
The lemma is a special case of Lemma 3.11 from \cite{helal-2025-proper}.

\begin{lemma}
\label[lemma]{lemma:quotientlemma}
Suppose $f \colon \A_{n,r} \to \A_{N,R}$ is a proper rational map of degree
$d$, $n \geq 2$, and write $f = \frac{p}{q}$ in lowest terms.  Then
$q$ is of degree at most $d-1$ and
\begin{equation}
\norm{p(z)}^2 = \Bigl(1 + b \bigl(\norm{z}^2-1\bigr)\Bigr) \abs{q(z)}^2 
+ Q(z,\bar{z}) \bigl(\norm{z}^2-1\bigr) \bigl(\norm{z}^2-r^2\bigr)
\end{equation}
for a real polynomial $Q(z, \bar{z})$ and $b = \frac{1-R^2}{1-r^2}$.
The $Q$ is of degree at most $d-2$ in $z$ and $d-2$ in $\bar{z}$.
\end{lemma}

Let us use the lemma to classify maps of degree 1.  We consider
the equidimensional case.
In the proof we will use the notion of a rank of a
real polynomial $\rho(z,\bar{z})$ in $\C^n$.  By this rank we mean
the rank of its matrix of coefficients, or alternatively the smallest number
$k$ so that
\begin{equation}
\rho(z,\bar{z}) = \sum_{j=1}^k \epsilon_j \abs{P_j(z)}^2 ,
\end{equation}
where $\epsilon_j = \pm 1$ and $P_j$ are holomorphic polynomials.

\begin{proposition} \label[proposition]{prop:deg1unitary}
If $f \colon \A_{n,r} \to \A_{n,R}$ is a proper rational map of degree 1
and $n \geq 2$,
then $r=R$ and $f$ is a unitary map.
\end{proposition}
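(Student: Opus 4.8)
The plan is to reduce the statement to a fact about automorphisms of $\B_n$: an automorphism carrying one origin-centered sphere to an origin-centered sphere must fix the origin. First I would invoke \cref{theorem:annulimapsrational}, which says $f$ extends to a proper holomorphic self-map of $\B_n$ and simultaneously restricts to a proper map $r\B_n \to R\B_n$. In particular $f$ sends the outer sphere $S^{2n-1}$ into $S^{2n-1}$ and the inner sphere $rS^{2n-1}$ into $RS^{2n-1}$. Since $f$ has degree one it is a linear fractional transformation, and a degree-one proper self-map of $\B_n$ is an automorphism (this also follows from Alexander's theorem \cite{alexander-1977-proper}, valid since $n \ge 2$). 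So the extended $f$ is an automorphism of $\B_n$.

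Next I would put $f$ in the standard normal form $f = U \varphi_a$, where $U \in U(n)$ is unitary and $\varphi_a$ is the involutive automorphism of $\B_n$ interchanging $0$ and $a$, with $a = f^{-1}(0)$. Because $U$ is unitary we have $\norm{f(z)} = \norm{\varphi_a(z)}$, so I can use the standard identity
\[
1 - \norm{f(z)}^2 = \frac{\bigl(1-\norm{a}^2\bigr)\bigl(1-\norm{z}^2\bigr)}{\abs{1 - \langle z, a\rangle}^2} .
\]

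The crux is the inner-sphere condition. On $\norm{z} = r$ the left-hand side equals the constant $1 - R^2$ and the factor $1 - \norm{z}^2 = 1 - r^2$ is constant, so $\abs{1 - \langle z, a\rangle}$ must be constant as $z$ ranges over $rS^{2n-1}$. Suppose $a \neq 0$. Restricting $z$ to the circle $\{\lambda\, a/\norm{a} : \abs{\lambda} = r\}$ lying in that sphere, the quantity $\langle z, a\rangle$ traces the circle of radius $r\norm{a}$ about $0$, so $1 - \langle z, a\rangle$ traces a circle of radius $r\norm{a} > 0$ centered at $1$, on which the modulus is nonconstant — a contradiction. Hence $a = 0$, i.e.\ $f(0) = 0$. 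Consequently $f = U$ is unitary, and $\norm{f(z)} = \norm{z}$ together with the inner-sphere condition gives $R = r$.

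The only real obstacle is this last step — that carrying a single origin-centered sphere to an origin-centered sphere pins down the origin — and everything around it is formal bookkeeping. Geometrically the same point is transparent: $r\B_n$ is a ball in the Bergman metric centered at $0$, its image under the isometry $f$ is a Bergman ball centered at $f(0)$, and a Bergman ball equals a Euclidean ball centered at the origin only when its center is the origin, so $f(r\B_n) = R\B_n$ forces $f(0) = 0$.
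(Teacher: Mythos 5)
Your proof is correct and follows essentially the same route as the paper's: write the degree-one map in the normal form $U\phi_a$ and show $a=0$ by restricting the inner-sphere condition to the circle in the direction of $a$. The only cosmetic difference is that you invoke the standard identity for $1-\norm{\phi_a(z)}^2$ and argue that $\abs{1-\langle z,a\rangle}$ cannot be constant on that circle, whereas the paper expands $\abs{a_1-re^{i\theta}}^2 = R^2\abs{1-re^{i\theta}\bar{a}_1}^2$ and equates coefficients; both yield $a=0$ and then $r=R$ identically.
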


\begin{proof}
We apply \cref{lemma:quotientlemma} to find $b$ and $Q$.  As $f$ is of
degree 1, then $Q\equiv 0$ and $q$ is a constant, meaning we can just take
$q=1$.
The rank of $\norm{f(z)}^2 = \norm{p(z)}^2$ must be the target dimension $n$, but
the rank of $\norm{p(z)}^2 = (1-b) + b \norm{z}^2$
is $n+1$ unless $b=1$.  So $b=1$ and $\norm{f(z)}^2 = \norm{z}^2$, which
implies that $f(z) = Uz$ for a unitary $U$.
\end{proof}

%

As a quick corollary, we can remove the hypothesis of rationality and degree.
By Hartogs phenomenon and the maximum principle, we find that
a proper holomorphic map $f \colon \A_{n,r} \to \A_{n,R}$ extends to a
proper holomorphic map of $\B_n$ to itself.
That this map is a rational map of degree 1 then follows by
Alexander's theorem.

\begin{corollary} \label[corollary]{corollary:equidimdeg1}
If $f \colon \A_{n,r} \to \A_{n,R}$ is a proper holomorphic map
and $n \geq 2$,
then $r=R$ and $f$ is a unitary map.
\end{corollary}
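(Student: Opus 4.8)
The plan is to reduce the statement to \cref{prop:deg1unitary}, which already handles the case of a proper rational map of degree $1$; it therefore suffices to show that an arbitrary proper holomorphic map $f \colon \A_{n,r} \to \A_{n,R}$ with $n \geq 2$ is automatically rational of degree $1$. First I would invoke \cref{theorem:annulimapsrational}: since $n \geq 2$, the Hartogs phenomenon fills in the inner ball and the maximum principle confines the image to $\B_n$, so $f$ extends to a proper holomorphic self-map $F \colon \B_n \to \B_n$ (here the target outer ball is again $\B_n$ because $N = n$). This already yields rationality, but I need the sharper conclusion that the degree is $1$.

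The degree comes from Alexander's theorem~\cite{alexander-1977-proper}: a proper holomorphic self-map of $\B_n$ for $n \geq 2$ is an automorphism, hence a linear fractional transformation of the form \eqref{eq:autBn}, which is rational of degree $1$. Restricting $F$ back to the annulus, \cref{theorem:annulimapsrational} also guarantees that $F$ carries $r\B_n$ properly onto $R\B_n$, so the original $f$ is precisely a proper rational map of degree $1$ from $\A_{n,r}$ to $\A_{n,R}$. At this point \cref{prop:deg1unitary} applies verbatim and yields $r = R$ together with the conclusion that $f$ is unitary.

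Since nearly all the analytic content is packaged in \cref{theorem:annulimapsrational} and Alexander's theorem, the only thing to verify is that the hypotheses line up, in particular that the ball extension genuinely restricts to a proper map of the inner spheres, so that the degree-$1$ transformation is the very map being fed into \cref{prop:deg1unitary}. The one point requiring a moment's care is that Alexander's theorem is a statement about the extended ball map, whereas \cref{prop:deg1unitary} is phrased for the annulus; the bridge is that the automorphism produced by Alexander is literally the holomorphic extension of $f$, so there is no ambiguity about which map is degree $1$. I do not expect any genuine obstacle here: this is a corollary whose weight rests entirely on the two previously established results.
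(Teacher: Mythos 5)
Your proof is correct and follows essentially the same route as the paper: extend $f$ through the inner ball (Hartogs phenomenon plus the maximum principle, packaged in \cref{theorem:annulimapsrational}), apply Alexander's theorem to conclude the extension is an automorphism of $\B_n$ and hence rational of degree $1$, and then invoke \cref{prop:deg1unitary}. The paper's argument is identical in substance, just stated more briefly.
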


When $N > n$, the result is a little more complicated but follows in the
same way.

\begin{proposition} \label[proposition]{prop:linearembeddings}
If $f \colon \A_{n,r} \to \A_{N,R}$ is a proper rational map of degree 1 and
$N > n \geq 2$,
then $R \geq r$ and $f$ is unitarily equivalent to the affine embedding
\begin{equation}
z \mapsto \sqrt{\frac{1-R^2}{1-r^2}} \, z \oplus
\sqrt{\frac{R^2-r^2}{1-r^2}} \oplus 0 .
\end{equation}
\end{proposition}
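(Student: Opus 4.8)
The plan is to use \cref{theorem:annulimapsrational} to extend $f$ to a proper rational map of balls and then exploit the two sphere conditions simultaneously. By that theorem $f$ extends to a proper map $\B_n \to \B_N$ with $f(S^{2n-1}) \subset S^{2N-1}$ and $f(rS^{2n-1}) \subset RS^{2N-1}$. Since $f$ has degree $1$, I would write it in reduced form $f(z) = \frac{Lz+c}{\langle z,a\rangle+b}$ with $L$ an $N\times n$ matrix, $a \in \C^n$, $b \in \C$, and $c \in \C^N$. Clearing denominators, the outer sphere condition reads $\norm{Lz+c}^2 = \abs{\langle z,a\rangle+b}^2$ on $S^{2n-1}$, and the inner one reads $\norm{Lz+c}^2 = R^2\abs{\langle z,a\rangle+b}^2$ on $rS^{2n-1}$. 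The key elementary lemma I would invoke is that a polynomial in $z,\bar z$ of bidegree at most $(1,1)$ that vanishes on a sphere $\norm{z}^2 = \rho^2$ is a real scalar multiple of $\norm{z}^2-\rho^2$. Applying it to each condition produces two honest identities valid for all $z \in \C^n$:
\[
\norm{Lz+c}^2 - \abs{\langle z,a\rangle+b}^2 = \lambda(\norm{z}^2-1), \qquad
\norm{Lz+c}^2 - R^2\abs{\langle z,a\rangle+b}^2 = \mu(\norm{z}^2-r^2),
\]
for real constants $\lambda,\mu$.

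The crucial step, which I expect to be the main obstacle, is to deduce that the denominator is constant, that is, $a=0$. Subtracting the two identities eliminates $\norm{Lz+c}^2$ and yields
\[
(1-R^2)\abs{\langle z,a\rangle+b}^2 = (\mu-\lambda)\norm{z}^2 + (\lambda-\mu r^2).
\]
Now I compare the Hermitian forms of bidegree $(1,1)$ on the two sides. The left-hand side contributes $(1-R^2)$ times the form $z \mapsto \abs{\langle z,a\rangle}^2$, which has rank at most one, while the right-hand side contributes $(\mu-\lambda)$ times $z \mapsto \norm{z}^2$, which has rank $n$. Because $R<1$ (the target is an annulus) and $n\ge 2$, a rank-one form cannot equal a nonzero multiple of the identity form, so we must have $\mu=\lambda$ and $a=0$. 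This is precisely where the hypothesis $n\ge 2$ is indispensable: in one dimension $\abs{\langle z,a\rangle}^2$ and $\norm{z}^2$ are proportional, which is exactly what permits the inversion maps in the $n=1$ case.

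Once $a=0$, the map is affine, $f(z)=Az+c$, since $b\ne 0$ (no poles on the sphere). I would then expand the first identity $\norm{Az+c}^2-1 = \lambda(\norm{z}^2-1)$ and match terms by bidegree: the $(1,1)$ part gives $A^*A = \lambda I_n$, the linear part gives $A^*c=0$, and the constant gives $\lambda = 1-\norm{c}^2$. Matching the second identity the same way returns $\mu=\lambda$ and $\norm{c}^2 = R^2 - \lambda r^2$. Solving the two scalar equations yields $\lambda = \frac{1-R^2}{1-r^2}$ and $\norm{c}^2 = \frac{R^2-r^2}{1-r^2}$; nonnegativity of $\norm{c}^2$ forces $R \ge r$, as claimed.

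Finally, $A^*A = \lambda I_n$ with $\lambda>0$ says $A = \sqrt{\lambda}\,V_1$ for a linear isometry $V_1 \colon \C^n \to \C^N$, and $A^*c=0$ says $c \perp \operatorname{im} V_1$. Since $N \ge n+1$, I can extend the orthonormal system consisting of the columns of $V_1$ together with $c/\norm{c}$ (when $c\ne 0$) to an orthonormal basis of $\C^N$, and let $W \in U(N)$ carry it to $e_1,\dots,e_n,e_{n+1},\dots$. Then $W f(z) = \sqrt{\lambda}\,z \oplus \norm{c} \oplus 0$, which is exactly the asserted affine embedding with the computed constants $\sqrt{\lambda} = \sqrt{\frac{1-R^2}{1-r^2}}$ and $\norm{c} = \sqrt{\frac{R^2-r^2}{1-r^2}}$. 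This establishes the unitary equivalence and completes the proof.
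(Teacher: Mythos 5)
Your proof is correct, and it takes a genuinely different route from the paper's. The paper uses only the outer sphere identity $\norm{p(z)}^2 - \abs{q(z)}^2 = \lambda(\norm{z}^2-1)$ and a rank (inertia) count on matrices of coefficients to conclude $N_f = n$; it then invokes \cref{prop:Nfsimplify} to split off the constant component, reducing to the equidimensional case, where \cref{prop:deg1unitary} --- proved via the explicit form \eqref{eq:autBn} of degree-one sphere maps and a one-variable computation showing $z_1 \mapsto \frac{a_1-z_1}{1-z_1\bar{a}_1}$ takes a zero-centered circle to a zero-centered circle only when $a_1=0$ --- forces the map to be unitary, after which the constants follow by elementary computation. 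You instead exploit both sphere conditions simultaneously: subtracting the two bidegree-$(1,1)$ identities eliminates $\norm{Lz+c}^2$, and comparing the rank-at-most-one Hermitian form $(1-R^2)\abs{\langle z,a\rangle}^2$ against $(\mu-\lambda)\norm{z}^2$, which has rank $n \geq 2$ when $\mu \neq \lambda$, kills the denominator outright. This single step replaces both the $N_f$-reduction and the circle computation of \cref{prop:deg1unitary}, and it isolates cleanly where the hypotheses $n \geq 2$ and $R<1$ enter (your remark that in dimension one the two forms are proportional, which is what permits the inversions, is exactly right). Your approach is self-contained and arguably more elementary; the paper's is shorter on the page because it reuses \cref{prop:Nfsimplify} and \cref{prop:deg1unitary}, which are needed elsewhere, and its inertia count foreshadows the rank argument in the proof of \cref{thm:gapthm1}. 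Two minor notes: your ``key elementary lemma'' on bidegree-$(1,1)$ polynomials vanishing on a sphere is precisely the divisibility fact the paper uses implicitly when it asserts that $\lambda$ is constant because $p$ and $q$ have degree one; and once $a=0$, the nonvanishing $b \neq 0$ is automatic (the denominator would otherwise be identically zero), so no separate appeal to the absence of poles on the sphere is needed at that point, though you do need \cref{theorem:annulimapsrational} at the start, as you correctly say, to know that $f$ maps both spheres into the respective spheres.
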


\begin{proof}
Again apply \cref{lemma:quotientlemma} to find $q=1$ and
$\norm{f(z)}^2 = \norm{p(z)}^2 = (1-b) + b \norm{z}^2$.
For this to be a squared norm we must have $b \leq 1$, which implies $R \geq r$.
The form then follows as above.
\end{proof}



\section{General hyperplane rank and its connection to the \texorpdfstring{$X$}{X}-variety}
\label[section]{sec:kf}

Before proving some basic results about $k_f$ that we will require, let
us discuss some of the history of this invariant.
When Dor \cite{dor-1990-proper} constructed a nonrational map of balls,
to show nonrationality, he showed that (in our language) $k_f = N$,
as any rational ball map has $k_f \leq N-1$.
When $k_f = N-1$, we obtain a map of Grassmannians that takes an open
dense set of affine hyperplanes in $\C^n$ to affine hyperplanes in $\C^N$.
This technique was used by Faran to classify
proper maps from $\B_2$ to $\B_3$.
The number $k_f$ is also related to the $X$-variety that Forstneri\v{c} used
to show the rationality result.  Let us quickly define the $X$-variety
and note the connection for rational maps.
Let $f \colon \B_n \to \B_N$ be a rational proper map.
By complexifying the relation that $\norm{f(z)}^2=1$ whenever
$\norm{z}^2=1$,
we find $\langle f(z) , f(w) \rangle = 1$
whenever $\langle z , w \rangle =1$.  The $X$-variety is defined as
\begin{equation}
X_f =
\{ (z,\zeta) \in \C^n \times \C^N
: \langle \zeta , f(w) \rangle = 1 \text{ for all $w$ such that }
\langle z , w \rangle = 1
\} .
\end{equation}
Note that the equation for (almost) every affine hyperplane in $\C^n$
can be written as $\langle z , w \rangle = 1$ for a fixed $w$.
Moreover, $\langle \zeta, f(w) \rangle = 1$ gives affine hyperplanes on
the target.
The \emph{fiber} of the $X$-variety is the subset for a fixed $z$.
Such a fiber is an affine subspace of some dimension.
It is not difficult to show that if the generic fiber
of the $X$-variety has dimension $\ell$, then $k_f = N - \ell - 1$.
The $X$-variety encodes everything about the map $f$.  For example, if $\ell=0$,
then we find that the $X$-variety is (except some possible
exceptional fibers) the graph of the map $f$.
It also encodes the multivariable analogue of reflection across the sphere.
The $X$-variety was defined by Forstneri\v{c} for the rationality result,
and it has been extensively studied by D'Angelo,
see~\cites{dangelo-2003-Xvariety,dangelo-2007-asian}.
In fact, due to the connection of $k_f$ and the dimension of the fiber,
some of the basic results about the $k_f$ integer follow directly
from the work of D'Angelo, although we present direct proofs here.
In particular D'Angelo developed a technique of quickly computing the
fibers of the $X$-variety by associating to the map a certain matrix $C(w)$
whose nullspace gives the fiber of the $X$-variety at each point.
Bounding the general hyperplane rank using Green's hyperplane restriction
theorem from commutative algebra was also used in a paper by Grundmeier, the
second author, and Vivas \cite{grundmeier-2014-rank} to prove a rigidity
theorem for CR maps between hyperquadrics.

Let us then derive a few basic results about the $k_f$ integer.
First we note that for any rational map, it is enough to check an
open dense subset of hyperplanes, that is, the general hyperplane.  A similar
but more complicated statement
holds for holomorphic maps defined in some domain, but
we have no need for the complication.
The space of affine hyperplanes in $\C^n$
has the topology of the
Grassmannian (minus the hyperplane at infinity).

\begin{proposition}
If $f \colon \C^n \dashrightarrow \C^N$ is a rational map,
then
$\dim \aff\bigl(f(H)\bigr) = k_f$ for an open dense
set of affine hyperplanes $H$.
\end{proposition}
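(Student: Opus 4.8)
The plan is to recast $\dim\aff\bigl(f(H)\bigr)$ as the rank of a matrix of evaluations whose entries depend rationally on the coefficients describing $H$, and then to use that a rank attains its maximum off a proper algebraic subvariety. The point is that $k_f$, being the maximum of the finitely many possible values $\dim\aff\bigl(f(H)\bigr)\in\{0,\dots,N\}$, is actually attained at some hyperplane $H_0$, and I would build an explicit open dense set out of this maximizer.

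First I would coordinatize. Writing an affine hyperplane as $H_{a,b}=\{z : a_1z_1+\cdots+a_nz_n=b\}$, the space of such hyperplanes is irreducible and is covered by the Zariski-open charts $\{a_j\neq0\}$. Choosing a chart containing the maximizer $H_0$ and, for notational convenience, taking it to be $\{a_n\neq0\}$ with $a_n$ normalized to $1$, I parametrize $H_{a,b}$ by the polynomial map $\phi_{a,b}(t)=\bigl(t_1,\dots,t_{n-1},\,b-a_1t_1-\cdots-a_{n-1}t_{n-1}\bigr)$ for $t\in\C^{n-1}$. Then $g_i:=f_i\circ\phi_{a,b}$ are rational in $t$ with coefficients rational in $(a,b)$. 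Since an affine form vanishes on $f(H_{a,b})$ exactly when the corresponding $\C$-linear combination of $1,g_1,\dots,g_N$ vanishes identically, one has $\dim\aff\bigl(f(H_{a,b})\bigr)=\dim_\C\operatorname{span}_\C\{1,g_1,\dots,g_N\}-1$.

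Next I would detect this span dimension by evaluation. For point-variables $t^{(0)},\dots,t^{(k)}\in\C^{n-1}$ I form the $(N+1)\times(k+1)$ matrix $A\bigl((a,b);t^{(0)},\dots,t^{(k)}\bigr)$ with rows indexed by $1,g_1,\dots,g_N$ and columns by the points; its entries are rational in $(a,b)$ and the $t^{(j)}$. Having $k+1$ affinely independent points in the image $f(H_{a,b})$ is equivalent to some $(k+1)\times(k+1)$ minor of $A$ being nonzero for some choice of the $t^{(j)}$. Taking $k=k_f$ and using the maximizer $H_0$ together with $k_f+1$ points whose images are affinely independent, I obtain a distinguished $(k_f+1)\times(k_f+1)$ minor $\mu\bigl((a,b);t^{(\bullet)}\bigr)$ that is a \emph{nonzero} rational function of all its arguments, since it does not vanish at $(H_0,t_0^{(\bullet)})$.

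Finally, I would let $G'$ be the set of $(a,b)$ in the chart for which $\mu$, regarded as a rational function of $t^{(\bullet)}$, is not identically zero; this is the nonvanishing locus of the $(a,b)$-coefficients of the numerator of $\mu$, hence a nonempty Zariski-open subset of the chart. As the chart is Zariski-open dense in the irreducible space of hyperplanes, $G'$ is Zariski-dense and therefore classically open and dense. For any $H_{a,b}\in G'$ there exist points $t^{(\bullet)}$ (which I may also take to avoid the pole locus of $f$) with $\mu\neq0$, so $A$ has rank at least $k_f+1$ and thus $\dim\aff\bigl(f(H_{a,b})\bigr)\ge k_f$; combined with $\dim\aff\bigl(f(H)\bigr)\le k_f$ for every $H$, this gives equality on all of $G'$. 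The only real obstacle is the bookkeeping in passing from ``the affine dimension of the image'' to a rank condition with variable evaluation points, and in checking that the distinguished minor is a genuinely nonzero rational function --- which is precisely where the attainment of the maximum $k_f$ enters.
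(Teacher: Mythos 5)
Your proof is correct and is in essence the same argument as the paper's, which disposes of the statement in one sentence by noting that the affine dimension of $f(H)$ is lower semicontinuous on the space of hyperplanes because the locus where it drops is defined by polynomial equations; your distinguished minor $\mu$ and the $(a,b)$-coefficients of its numerator are precisely an explicit construction of those polynomial equations. Since the maximum $k_f$ is attained at some $H_0$, the nonvanishing locus is a nonempty Zariski-open subset of the irreducible hyperplane space, hence classically open and dense --- exactly as you argue.
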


The proof is to simply note that the affine dimension
of $f(H)$ must be lower
semicontinuous on the space of hyperplanes $H$,
since the set of hyperplanes where the affine dimension
is at most some dimension is defined by polynomial
equations.

Next,
we recall the proof of
the classic result that $k_f \leq N-1$ for rational maps of
spheres, as the technique will be useful later.

\begin{proposition} \label[proposition]{prop:kfNm1}
Suppose $f \colon \C^n \dashrightarrow \C^N$ is a rational map so that
$f(S^{2n-1}) \subset S^{2N-1}$.  Then $k_f \leq N-1$.
\end{proposition}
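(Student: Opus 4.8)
The plan is to run the classical Segre-variety argument. Write $f = p/q$ with $p = (p_1,\dots,p_N)$ and $q$ polynomials having no common factor. The hypothesis $f(S^{2n-1}) \subset S^{2N-1}$ means $\norm{p(z)}^2 = \abs{q(z)}^2$ on $\norm{z}^2 = 1$, so the real polynomial $\norm{p}^2 - \abs{q}^2$ vanishes on the sphere and is therefore divisible by the irreducible polynomial $\norm{z}^2 - 1$:
\[
\sum_{j=1}^N \abs{p_j(z)}^2 - \abs{q(z)}^2 = (\norm{z}^2 - 1)\, r(z,\bar z),
\]
for some real polynomial $r$. Since both sides are polynomials in the $2n$ independent quantities $z$ and $\bar z$, I would polarize by replacing $\bar z$ with an independent conjugate variable $\bar w$, obtaining the formal identity
\[
\sum_{j=1}^N p_j(z)\,\overline{p_j(w)} - q(z)\,\overline{q(w)} = \bigl(\langle z,w\rangle - 1\bigr)\, r(z,\bar w).
\]

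Next I would read off the invariance of Segre varieties. Fix $w$; the Segre variety of the source sphere is the affine hyperplane $Q_w = \{z : \langle z,w\rangle = 1\}$, and the Segre variety of the target sphere at a point $\zeta$ is $Q'_\zeta = \{Z : \langle Z,\zeta\rangle = 1\}$. Restricting the polarized identity to $z \in Q_w$ kills the right-hand side, so $\sum_j p_j(z)\overline{p_j(w)} = q(z)\overline{q(w)}$; dividing by $q(z)\overline{q(w)}$ (legitimate for generic $w$, where $q(w)\neq 0$ and $q$ is not identically zero on $Q_w$) gives $\langle f(z),f(w)\rangle = 1$. In other words,
\[
f(Q_w) \subset Q'_{f(w)}.
\]
For generic $w$ the value $f(w)$ is defined and nonzero (as $f$ maps the sphere into the sphere it is not identically zero), so $Q'_{f(w)}$ is a genuine affine hyperplane of dimension $N-1$. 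Hence $\aff\bigl(f(Q_w)\bigr) \subset Q'_{f(w)}$ and $\dim \aff\bigl(f(Q_w)\bigr) \le N-1$.

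Finally I would combine this with the genericity statement established just above, that $k_f$ is attained on an open dense set of affine hyperplanes. As $w$ ranges over $\C^n \setminus \{0\}$, the Segre varieties $Q_w$ exhaust exactly the affine hyperplanes of $\C^n$ not passing through the origin, which is an open dense subset of the space of hyperplanes. Since the conditions ``$Q_w$ realizes $k_f$'', ``$q(w) \neq 0$'', and ``$f(w) \neq 0$'' are each open and dense, I can choose a single generic $w$ satisfying all three; for that $w$ the previous paragraph gives $k_f = \dim \aff\bigl(f(Q_w)\bigr) \le N-1$.

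The steps are all standard, and the one place that demands care is the bookkeeping in the last paragraph: I must ensure a common generic $w$ exists for which $Q_w$ simultaneously attains $k_f$, avoids the pole set of $f$ and the zero set of $q$ along $Q_w$, and has $f(w) \neq 0$ so that the target Segre variety $Q'_{f(w)}$ is an honest hyperplane of dimension exactly $N-1$ rather than empty or all of $\C^N$. Each condition is generic, so their common intersection is nonempty; the remaining verifications (divisibility by $\norm{z}^2-1$ and the validity of the polarization) are routine.
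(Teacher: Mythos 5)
Your proof is correct and takes essentially the same route as the paper's: polarize the identity $\norm{p(z)}^2 - \abs{q(z)}^2 = r(z,\bar z)\left(\norm{z}^2-1\right)$ to conclude that $f$ carries the source Segre varieties (the affine hyperplanes $\{z : \langle z,w\rangle = 1\}$, which sweep out an open dense set of hyperplanes) into target Segre varieties, and then invoke the genericity of $k_f$. Your additional bookkeeping about choosing a single generic $w$ avoiding the pole set and with $f(w)\neq 0$ only makes explicit what the paper leaves implicit, and is sound.
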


\begin{proof}
Let $\langle \cdot,\cdot \rangle$ denote the standard sesquilinear
inner product so that the sphere $S^{2n-1}$ is defined by
$\norm{z}^2 = \langle z,z \rangle = 1$.
For any fixed $w \in \C^n$,
define the Segre variety
\begin{equation}
\Sigma_w = \{ z \in \C^n : \langle z,w \rangle = 1 \} .
\end{equation}
Similarly if $\omega \in \C^N$, we have the
Segre variety of the target sphere $\Sigma_\omega \subset \C^N$.
Note that the Segre variety for a sphere is an affine
hyperplane.
If $f=\frac{p}{q}$, then the fact that $f$ takes the sphere to
the sphere means that
$\norm{p(z)}^2 - \abs{q(z)}^2 = g(z,\bar{z}) \left( \norm{z}^2-1 \right)$,
or
$\norm{f(z)}^2 - 1 = \frac{g(z,\bar{z})}{\abs{q(z)}^2} \left( \norm{z}^2-1 \right)$,
where $g$ is a polynomial.
By polarizing this identity,
we find the equality
\begin{equation}
\inner{f(z),f(w)} - 1 = \frac{g(z,\bar{w})}{q(z)\overline{q(w)}} \left( 
\inner{z,w}-1
\right) .
\end{equation}
Writing $P = q^{-1}(0)$ for the pole set of $f$,
we therefore have that 
$f(\Sigma_w \setminus P) \subset \Sigma_{f(w)}$
for all $w$ where $f(w)$ is defined.
Any affine hyperplane that does not go through the origin
can be written as
$\{ z : \langle z,w \rangle=1 \}$ for some $w \in \C^n$.
Therefore, there is an open set of affine hyperplanes
which $f$ takes into hyperplanes.  Therefore $k_f \leq N-1$.
\end{proof}

The general hyperplane rank $k_f$ is also invariant under conjugation by linear fractional
transformations.
The proof is obvious.

\begin{proposition}
For a map $f \colon U \subset \C^n \to \C^N$,
$\phi$ an LFT on $\C^n$,
and $\psi$ an LFT on $\C^N$, we have
$k_f = k_{\psi \circ f \circ \phi}$.
In other words, $k_f$ is both a spherical and a unitary invariant.
\end{proposition}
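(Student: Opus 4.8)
The plan is to reduce everything to one observation: an LFT is the restriction to an affine chart of a projective-linear automorphism, and the affine hull of a set in $\C^M$ is really a projective span in disguise. Concretely, I would first record the dictionary. Let $\iota_M \colon \C^M \to \mathbb{P}^M$ be the standard embedding $z \mapsto [z:1]$. For any $X \subset \C^M$, the dimension $\dim \aff(X)$ equals the dimension of the smallest projective-linear subspace of $\mathbb{P}^M$ containing $\iota_M(X)$: the projective closure of $\aff(X)$ is one such subspace of dimension $\dim\aff(X)$, and conversely any projective-linear subspace through $\iota_M(X)$ is not contained in the hyperplane at infinity, so its intersection with the affine chart is an affine subspace containing $\aff(X)$ of the same dimension as the projective subspace. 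This identity is the crux, since it converts $k_f$ into a statement about projective spans, which are manifestly invariant under projective-linear maps.

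Handling the target LFT $\psi$ is then immediate and does not even use that $\psi$ is composed with $f$. Writing $\tilde\psi$ for the projective-linear automorphism of $\mathbb{P}^N$ extending $\psi$, we have $\iota_N \circ \psi = \tilde\psi \circ \iota_N$ wherever $\psi$ is defined, so $\iota_N(\psi(Y)) = \tilde\psi(\iota_N(Y))$ for any set $Y$ on which $\psi$ acts. Since $\tilde\psi$ carries projective-linear subspaces to projective-linear subspaces of equal dimension, it preserves projective spans, and therefore $\dim\aff(\psi(Y)) = \dim\aff(Y)$. Applying this with $Y = f(H \cap U)$ for each affine hyperplane $H$ gives the equality term by term, hence $k_{\psi\circ f} = k_f$.

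For the source LFT $\phi$, the point is that precomposition merely re-indexes the family of hyperplanes over which the maximum is taken. The extension $\tilde\phi$ permutes the projective hyperplanes of $\mathbb{P}^n$, and this permutation restricts to a bijection on affine hyperplanes of $\C^n$ away from the single one whose image is the hyperplane at infinity (and the single one that is its preimage). For an affine hyperplane $H'$ with $\tilde\phi(\overline{H'})$ not at infinity, let $H$ be the affine hyperplane it determines; then $\phi$ maps $H'$ intersected with the domain of $f\circ\phi$ onto a dense open subset of $H \cap U$, obtained by deleting the lower-dimensional pole and indeterminacy loci. Because the affine hull of a set equals that of any dense subset (the affine hull depends only on the closure, and a continuous $f$ carries a dense subset to a dense subset of the image), we get $\dim\aff\bigl(f(\phi(H'\cap \operatorname{dom}(f\circ\phi)))\bigr) = \dim\aff\bigl(f(H\cap U)\bigr)$. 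As $H'$ ranges over all affine hyperplanes, $H$ ranges over all but one, so the two maxima coincide and $k_{f\circ\phi} = k_f$. Combining the two invariances yields $k_f = k_{\psi\circ f\circ\phi}$.

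The only genuine care needed, and the step I expect to be the mild obstacle, is the bookkeeping at infinity and at the pole loci in the source argument: one must confirm that discarding the exceptional hyperplane and the lower-dimensional indeterminacy sets cannot alter the maximum. This is exactly where the insensitivity of the affine hull to passing to a dense subset does the work, so no estimate or explicit computation is required; the argument is entirely the projective and set-theoretic observations above, which is why the claim is \emph{obvious}.
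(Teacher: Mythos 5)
The paper offers no argument here --- its proof reads, in full, ``The proof is obvious'' --- so there is nothing for your write-up to deviate from, and your projectivization is the natural way to make the claim precise. Most of it is sound: the dictionary between affine hulls in $\C^M$ and projective spans in $\bP^M$, the target-side invariance under the projective-linear extension $\tilde\psi$, and the use of density to discard pole and indeterminacy loci \emph{inside a fixed hyperplane} are all correct.

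The one step your cited tool does not cover is the exceptional hyperplane. Your dense-subset lemma operates within a single hyperplane $H$: deleting a codimension-one affine subspace (or an analytic subset) from $H\cap U$ leaves a dense subset, so $\dim\aff$ of the image is unchanged. But as $H'$ ranges over affine hyperplanes of the source, the corresponding $H$ misses exactly one hyperplane $H_0$ (the affine trace of $\tilde\phi$ applied to the hyperplane at infinity), and that is an omission from the \emph{index family} over which the maximum is taken, not a deletion inside a hyperplane. If the maximum defining $k_f$ were attained only at $H_0$, your argument would a priori give only $k_{f\circ\phi}\le k_f$; note also that the symmetric argument with $\phi^{-1}$ omits a hyperplane as well, so sandwiching the two one-sided inequalities does not close the gap by itself. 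What closes it is lower semicontinuity of $H\mapsto\dim\aff\bigl(f(H\cap U)\bigr)$ on the space of hyperplanes: if $k+1$ points of $H_0\cap U$ have affinely independent images under $f$, then hyperplanes near $H_0$ contain nearby points of $U$ whose images remain affinely independent, so the maximal value is attained on an open set of hyperplanes and never at $H_0$ alone. For rational $f$ this is exactly the proposition the paper proves immediately before this one (the value $k_f$ is attained on an open dense set of hyperplanes). The same remark disposes of the analogous target-side issue, where $f(H\cap U)$ may meet the pole hyperplane of $\psi$; with this one-line supplement your proof is complete.
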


As part of 
\cref{thm:kfN1impliesHd}, we claim that $k_{H_d} = N-1$.  Let us prove that
now.

\begin{proposition}
\label[proposition]{prop:kfHd}
For the homogeneous map $H_d$, we have
$k_{H_d} = N-1$ where $N = \binom{n+d-1}{d}$ is the target dimension of
$H_d$.
\end{proposition}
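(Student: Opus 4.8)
The plan is to prove the two inequalities $k_{H_d} \le N-1$ and $k_{H_d} \ge N-1$ separately. The upper bound is immediate: since $\norm{H_d(z)}^2 = \norm{z}^{2d}$ equals $1$ on $S^{2n-1}$, the map $H_d$ takes the sphere into the sphere, so \cref{prop:kfNm1} gives $k_{H_d} \le N-1$. All the content is therefore in producing a single affine hyperplane $H$ with $\dim \aff\bigl(H_d(H)\bigr) = N-1$.

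For the lower bound I would take the concrete hyperplane $H = \{z \in \C^n : z_n = 1\}$ and compute the affine hull of its image directly. Recall that, up to the positive scalars $\sqrt{\binom{d}{\alpha}}$ coming from the multinomial theorem, the components of $H_d$ are exactly the monomials $z^\alpha$ with $|\alpha| = d$. Restricting such a monomial to $H$, i.e.\ setting $z_n = 1$, yields the monomial $(z')^{\alpha'}$ in $z' = (z_1,\dots,z_{n-1})$, where $\alpha' = (\alpha_1,\dots,\alpha_{n-1})$. As $\alpha$ runs over all multi-indices with $|\alpha| = d$, the truncation $\alpha'$ runs bijectively over all multi-indices in $n-1$ variables with $|\alpha'| \le d$ (one recovers $\alpha_n = d - |\alpha'|$); this bijection is exactly the identity $\binom{n+d-1}{d} = \binom{(n-1)+d}{d}$ matching the two counts.

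The key computation is then a duality statement: $\dim\aff\bigl(H_d(H)\bigr) = N - \dim W_H$, where $W_H$ is the space of affine functions $\ell(\zeta) = c_0 + \sum_\alpha c_\alpha \zeta_\alpha$ on $\C^N$ whose pullback $\ell \circ H_d$ vanishes identically on $H$. Writing out $\ell \circ H_d|_H = c_0 + \sum_{|\alpha|=d} c_\alpha \sqrt{\binom{d}{\alpha}}\,(z')^{\alpha'}$ and grouping by the monomial $(z')^{\alpha'}$, the bijection above exhibits this as a constant plus the monomials of degree $\le d$ in $z'$. Since distinct monomials (including the constant $1$) are linearly independent functions, vanishing forces $c_\alpha = 0$ for every $\alpha \ne (0,\dots,0,d)$, together with the single relation $c_0 = -c_{(0,\dots,0,d)}$ coming from the constant term. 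Hence $W_H$ is one-dimensional, so $\dim\aff\bigl(H_d(H)\bigr) = N-1$ and $k_{H_d} \ge N-1$. Combining the two bounds yields $k_{H_d} = N-1$.

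I expect the only real work, rather than a genuine obstacle, to be the careful matching of the two indexing sets and checking that the factors $\sqrt{\binom{d}{\alpha}}$, being nonzero, never interfere with the linear-independence argument; the substantive idea is simply that setting $z_n = 1$ converts the degree-$d$ Veronese coordinates into the full list of degree-$\le d$ monomials in one fewer variable, one of which is the constant that accounts for the single affine relation.
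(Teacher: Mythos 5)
Your proof is correct and takes essentially the same route as the paper, which restricts $H_d$ to a hyperplane $\{z_1 = c\}$, $c \neq 0$, observes that exactly one component becomes constant while the remaining components restrict to linearly independent monomials (with \cref{prop:kfNm1} supplying the upper bound $k_{H_d} \leq N-1$); your version at $\{z_n = 1\}$ merely makes the multi-index bijection and the affine-functional duality explicit. The only cosmetic point is that $\{z_n = 1\}$ misses the annulus and the ball, so if one computes $k_{H_d}$ using the domain $\A_{n,r}$ rather than $\C^n$ one should instead take $\{z_n = c\}$ with $0 < \abs{c} < 1$, where the identical computation goes through with harmless nonzero factors $c^{\,d-\abs{\alpha'}}$.
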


\begin{proof}
The map $H_d$ has as its components every degree $d$ monomial in $n$
variables, $H_d = (z_1^d, \sqrt{d} \, z_1^{d-1}z_2, \ldots)$.
If we consider what $H_d$ does to the hyperplane $\{ z_1 = c \}$
for nonzero constants $c$, we find that except for the first component
which becomes the constant $c^d$, all the other components are affine
independent.  In other words, the affine span of $H_d(\{z_1=c\})$ is of
dimension $N-1$.
\end{proof}

Next we consider the $k_f$ integer for juxtapositions.
We can think of the set of rational ball (or annulus) maps
as a convex set; the set of the squared norms
$\norm{f(z)}^2$
where $f$ is a map of balls (or annuli) is in fact convex,
and the juxtaposition of $f$ and $g$ has the squared norm
$(1-t)\norm{f(z)}^2+ t\norm{g(z)}^2$
that is, it is a convex combination,
see for example~\cite{dangelo-2009-complexity}.
The next proposition therefore
shows that for rational ball (or annulus) maps,
$k_f = N_f-1$ happens only on the non-extremal points
of this convex set.

\begin{proposition}
\label[proposition]{proposition:juxtaposekf}
Suppose that $F_t$ is a juxtaposition of two rational proper maps
of balls (or annuli) $F_0$ and $F_1$,
that is, $F_t = \sqrt{1-t}\, F_0 \oplus \sqrt{t}\, F_1$
for some $t \in (0,1)$,
and further suppose that $\norm{F_0(z)}^2 \not\equiv \norm{F_1(z)}^2$.
Then $k_{F_t} \leq N_{F_t}-2$.
\end{proposition}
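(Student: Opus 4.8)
The plan is to run the Segre-variety argument from the proof of \cref{prop:kfNm1} on each summand separately and then intersect the two resulting hyperplanes with the affine hull of the image. Write $f = F_t$, split the target as $\C^N = \C^{N_0} \oplus \C^{N_1}$ according to the two summands, and let $A = \aff f(\B_n)$, an affine subspace of dimension $N_f$ (recall annulus maps extend to ball maps, so this is the affine hull of the full image). By the lower-semicontinuity result it suffices to bound $\dim \aff f(H)$ for a general hyperplane $H$, and since a general hyperplane misses the origin we may take $H = \Sigma_w = \{ z : \langle z,w\rangle = 1\}$ for a general $w$.

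First I would record the key containment. Both $F_0$ and $F_1$ are rational sphere maps, so the polarized identity in the proof of \cref{prop:kfNm1} gives $F_0(\Sigma_w \setminus P_0) \subset \{\zeta_0 : \langle \zeta_0, F_0(w)\rangle = 1\}$ and likewise for $F_1$. Pulling these back to $\C^N$ and rescaling, $f(\Sigma_w)$ lies inside $A \cap L_0 \cap L_1$, where $L_0 = \{\langle \eta_0, F_0(w)\rangle = \sqrt{1-t}\,\}$ involves only the first block of coordinates and $L_1 = \{\langle \eta_1, F_1(w)\rangle = \sqrt{t}\,\}$ only the second. Thus it is enough to show that for a general $w$ the two affine hyperplanes $L_0, L_1$ cut $A$ in codimension exactly two, that is, $\dim(A \cap L_0 \cap L_1) = N_f - 2$.

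The two easy inputs are that neither $L_i$ contains $A$: if $A \subseteq L_0$ then $\langle F_0(z), F_0(w)\rangle \equiv 1$ as a function of $z$, and setting $z = w$ forces $\norm{F_0(w)}^2 = 1$, which fails for a general $w$ taken inside the ball; the same argument handles $L_1$. Hence $A \cap L_0$ and $A \cap L_1$ are each of codimension one in $A$, and since $f(\Sigma_w) \subseteq A \cap L_0 \cap L_1$ is nonempty they are not parallel-disjoint. The only way the intersection fails to have codimension two is therefore that they coincide, $A \cap L_0 = A \cap L_1$.

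The hard part is excluding this coincidence, and this is where the hypothesis $\norm{F_0}^2 \not\equiv \norm{F_1}^2$ must enter. If $A \cap L_0 = A \cap L_1$ for a general $w$, then the two defining affine functionals are proportional on $A$; evaluating on the image points $f(z)$, which span $A$, this reads
\begin{equation}
\sqrt{t}\,\bigl(\langle F_1(z), F_1(w)\rangle - 1\bigr)
= \lambda(w)\,\sqrt{1-t}\,\bigl(\langle F_0(z), F_0(w)\rangle - 1\bigr)
\end{equation}
for all $z$. Since the bracketed factors are antiholomorphic in $w$, the scalar $\lambda(w)$ is a ratio of antiholomorphic functions, hence antiholomorphic; writing it as $\overline{m(w)}$ and using the Hermitian symmetry of both sides under $z \leftrightarrow w$ forces $m$ to be a real constant $c$, whence $\norm{F_1(z)}^2 - 1 = c\,(\norm{F_0(z)}^2 - 1)$. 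The plan is then to argue that this rigid affine relation between the two squared norms is incompatible with $\norm{F_0}^2 \not\equiv \norm{F_1}^2$, so that the coincidence cannot occur and $\dim \aff f(\Sigma_w) \le N_f - 2$ for general $w$, giving $k_f \le N_f - 2$. I expect this final incompatibility — pinning down the proportionality constant $c$ and excluding the degenerate relation $\norm{F_1}^2 = c\norm{F_0}^2 + (1-c)$ — to be the main obstacle, since the two hyperplanes $L_0, L_1$ live in complementary coordinate blocks and the only possible obstruction to their independence on $A$ is a hidden linear dependence between $\norm{F_0}^2$ and $\norm{F_1}^2$.
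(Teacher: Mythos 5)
Your reduction of the problem to excluding the coincidence $A \cap L_0 = A \cap L_1$ for general $w$ is set up correctly, and your derivation that coincidence forces $\norm{F_1(z)}^2 - 1 = c\,\bigl(\norm{F_0(z)}^2 - 1\bigr)$ for a real constant $c > 0$ is sound. The gap is the step you yourself flag as the remaining obstacle: that relation is \emph{not} incompatible with $\norm{F_0}^2 \not\equiv \norm{F_1}^2$, so the coincidence case cannot be excluded. Indeed, for any rational proper ball map $F_0$ and any $c \in (0,1)$, the map $F_1 = \sqrt{c}\, F_0 \oplus \sqrt{1-c}$ is again a rational proper ball map, it satisfies $\norm{F_1(z)}^2 - 1 = c\bigl(\norm{F_0(z)}^2 - 1\bigr)$, and $\norm{F_1}^2 \not\equiv \norm{F_0}^2$; the annulus analogue works the same way. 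Worse, in this configuration the conclusion of the proposition genuinely fails: taking $F_0(z) = z$, the juxtaposition
\begin{equation}
F_t(z) = \left(\sqrt{1-t}\, z,\ \sqrt{tc}\, z,\ \sqrt{t(1-c)}\right)
\end{equation}
is an affine embedding with $N_f = n$ and $k_f = n-1 = N_f - 1$, not $N_f - 2$, even though $\norm{F_0}^2 \not\equiv \norm{F_1}^2$. So no argument can close your final step as planned; what your computation actually shows is that the hypothesis of the statement needs to be strengthened to require that $1 - \norm{F_1}^2$ is not a constant multiple of $1 - \norm{F_0}^2$ (equivalently, that $F_1$ is not, up to unitary, an ``affine shift'' $\sqrt{c}\,F_0 \oplus \sqrt{1-c}\oplus 0$ of $F_0$), under which your pencil argument does go through: the two functionals cutting $A$ are then never proportional for general $w$, and $k_f \leq N_f - 2$ follows.

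For comparison, the paper's proof takes a different route: it homogenizes, splits off the common components, writing $F_0 = G \oplus \widetilde{F}_0$ and $F_1 = G \oplus \widetilde{F}_1$ after linear changes of the target, and produces two normal vectors supported in complementary coordinate blocks; if these are dependent for every hyperplane, it appeals to the fact from the proof of \cref{prop:kfNm1} that the normals to the Segre hyperplanes are the points $f(w)$, which span the target, to get a contradiction. That contradiction, however, requires at least one of $\widetilde{F}_0$, $\widetilde{F}_1$ to be nonzero, and the case where all components are shared after the normalizations (which is exactly your coincidence case, and exactly where the example above lives, since the LFT normalizations destroy the norm condition, which is not LFT-invariant) is passed over. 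So your approach is genuinely different --- a pencil-of-Segre-functionals argument on $\aff f(\B_n)$ rather than a component-decomposition argument --- and it has the virtue of isolating precisely the degenerate configuration that both your attempt and the paper's proof must confront; but as a proof of the proposition as literally stated it cannot be completed, because the statement as written admits the counterexample above.
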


The condition on the norms is to ensure that we are taking the convex combination
of a point with itself in the space of squared norms.  The condition is equivalent
to requiring there is no unitary $U$ such that $F_1 = U F_0$ (possibly
adding zero components to make the target dimensions equal).

\begin{proof}
We note that if 
$k_{F_0} \leq N_{F_0} - 2$ or 
$k_{F_1} \leq N_{F_1} - 2$, then we are done trivially.
Therefore suppose that
$k_{F_0} = N_{F_0} - 1$ and
$k_{F_1} = N_{F_1} - 1$.
Also note from the proof of \cref{prop:kfNm1} that the hyperplanes that
contain $f(H)$ are Segre varieties of the sphere and the normal vectors
defining these hyperplanes are $f(w)$ as $w$ varies.  In particular,
we find these vectors in an $N_f$-dimensional hyperplane, which we can assume
is the entire target space.
The integers $k_f$ and $N_f$ and the fact about the normal vectors are all invariant under any
LFT and so we only need to prove the result for $F_0 \oplus F_1$.
We can also assume that $F_0$ has $N_{F_0}$ components and $F_1$
has $N_{F_1}$ components.
We can further homogenize both maps up to the same degree and instead of
affine hyperplanes we deal with linear subspaces.  The dimensions increase
by 1, but that is not relevant to us as we are interested in their
difference.  In other words, what we have is two homogeneous
polynomial maps $F_0$ and $F_1$ of the same degree, and we have
that the components of $F_0$ are linearly independent and the
components of $F_1$ are linearly independent.  We further have
that for any codimension 1 subspace $H$, the span of $F_0(H)$
is of codimension 1 (in the codomain of $F_0$) and same for $F_1(H)$.
It is possible that components of $F_0$ and $F_1$ are not linearly
independent when put together.  After composing with a linear map,
we can write
$F_0 = G \oplus \widetilde{F}_0$ and
$F_1 = G \oplus \widetilde{F}_1$ so that
$P = G \oplus \widetilde{F}_0 \oplus \widetilde{F}_1$ has linearly independent
components.  Write the target space as $\C^j \times \C^k \times \C^\ell$.
The fact that
$1 \leq N_{F_0}-k_{F_0}$ implies there is a nonzero constant vector
$v = (v_1,v_2,0)$ with $v_1 \in \C^j$, $v_2 \in \C^k$, and $0 \in \C^\ell$ so
that $\xi \cdot v = 0$ for all $\xi \in P(H)$.  There is also a vector
$w = (w_1,0,w_3)$ so that $\xi \cdot w = 0$ for all $\xi \in P(H)$.
We find that unless $v_2$ and $w_3$ are zero and $v_1$ and $w_1$ are
multiples of each other, $P(H)$ lives in a codimension 2
space and we are done.  Thus suppose that there exists a $v = w = (v_1,0,0)$ for every
subspace $H$.  But our assumption about the normal vectors of
the hyperplanes would give a contradiction.  Thus $v$ and $w$ are
always linearly independent and we find that $N_P-k_P \geq 2$.
\end{proof}

\section{First gap} \label[section]{sec:gap}

Before we prove \cref{thm:gapthm1},
we need a result on Hermitian rank that uses similar 
statement and proof from \cite{dangelo-2012-pfisters}*{Theorem~1.1}.
We only need the quadratic version of this result, but we state the more
general result.
Recall that the rank of
a real polynomial $r(z,\bar{z})$ is
the rank of its matrix of coefficients, or alternatively the smallest $k$
such that
$r(z,\bar{z}) = \sum_{j=1}^k \epsilon_j \abs{P_j(z)}^2$.

\begin{theorem}
\label[theorem]{thm:hermitian}
Let $n \geq 1$,
$s_1, s_2$ nonzero real numbers of the same sign,
and $Q(z, \bar{z})$ be a nonzero real polynomial on $\C^n$.
Then
\begin{equation}
    \rank Q(z,\bar{z}) (\norm{z}^2 + s_1) (\norm{z}^2 + s_2) 
    \geq \rank (\norm{z}^2 + s_1) (\norm{z}^2 + s_2)
    = \binom{n+2}{2}.
\end{equation}
\end{theorem}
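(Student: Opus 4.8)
The plan is to split \cref{thm:hermitian} into the rank computation (the equality) and the multiplicativity bound (the inequality), the latter being the real work. Set $p := (\norm{z}^2 + s_1)(\norm{z}^2 + s_2) = \norm{z}^4 + (s_1+s_2)\norm{z}^2 + s_1 s_2$. For the equality I would use that the matrix of coefficients of a Hermitian polynomial is block diagonal with respect to bidegree: since the three summands of $p$ are bihomogeneous of bidegrees $(2,2)$, $(1,1)$, and $(0,0)$, the coefficient matrix of $p$ is a direct sum of the coefficient matrices of $\norm{z}^4$, of $(s_1+s_2)\norm{z}^2$, and of the constant $s_1 s_2$, so $\rank p$ is the sum of their ranks. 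Now $\rank \norm{z}^4 = \binom{n+1}{2}$, because $\norm{z}^4 = \norm{z^{\otimes 2}}^2 = \norm{H_2(z)}^2$ and the components of $H_2$ are linearly independent (cf.\ \cref{prop:kfHd}), while $\rank \norm{z}^2 = n$ and the constant has rank $1$. The hypothesis that $s_1,s_2$ are nonzero of the same sign is used precisely here: it guarantees $s_1+s_2 \neq 0$ and $s_1 s_2 \neq 0$, so neither the degree-$1$ block nor the constant block collapses, and $\binom{n+1}{2} + n + 1 = \binom{n+2}{2}$.

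For the inequality I would first reduce to a statement about products of Hermitian squares. Write a minimal representation $p = \sum_{k=1}^{K} \epsilon_k \abs{B_k}^2$ with $K = \rank p = \binom{n+2}{2}$ and $\epsilon_k = \pm 1$; the $B_k$ are then $\binom{n+2}{2}$ linearly independent holomorphic polynomials of degree $\leq 2$, hence a basis of that whole space. Write also $Q = \sum_{j} \eta_j \abs{A_j}^2$ with $\eta_j = \pm 1$ and the $A_j$ not all zero. Then
\[
Qp = \sum_{j,k} \eta_j \epsilon_k \, \abs{A_j B_k}^2 ,
\]
so, letting $W = \operatorname{span}\{A_j B_k\}$ with a fixed basis and $M$ the matrix expressing the products $A_j B_k$ in that basis, we get $\rank(Qp) = \rank(M^{*} D M)$ with $D = \operatorname{diag}(\eta_j \epsilon_k)$. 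Two observations drive the argument. First, $\dim W \geq \binom{n+2}{2}$: for any nonzero $A_{j_0}$ the products $\{A_{j_0} B_k\}_k$ are linearly independent (multiplication by a nonzero polynomial is injective) and there are $\binom{n+2}{2}$ of them. Second, $M$ has full column rank $\dim W$. Hence whenever $D$ is definite — in particular when both $p$ and $Q$ are positive definite, i.e.\ $s_1,s_2 > 0$ and $Q$ a genuine sum of squares — one has $\rank(M^{*} D M) = \rank(M^{*} M) = \dim W \geq \binom{n+2}{2}$, and we are done.

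The main obstacle is the indefinite case, where cancellation among the $A_j B_k$ can in principle lower $\rank(M^{*} D M)$ below $\dim W$, and one must rule out that it drops below $\binom{n+2}{2}$. This is exactly the point handled by the method of \cite{dangelo-2012-pfisters}*{Theorem~1.1}, which I would adapt. The mechanism is that after polarizing (replacing $\bar z$ by an independent variable $\zeta$) each factor becomes $z \cdot \zeta + s_i$, which is irreducible in $\C[z,\zeta]$ and genuinely involves both sets of variables; such a factor cannot divide a ``pure tensor'' $g(z)h(\zeta)$, and pushing this obstruction quantitatively through the signature bookkeeping for $M^{*} D M$ prevents the rank from collapsing and yields the lower bound $\binom{n+2}{2}$. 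The same-sign hypothesis reappears here in keeping the definite top block — the positive definite $\norm{z}^4$-part of rank $\binom{n+1}{2}$ — intact under the multiplication. Combining the three parts gives $\rank(Qp) \geq \rank p = \binom{n+2}{2}$, with equality for constant $Q$.
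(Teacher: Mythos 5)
Your equality computation (bidegree block-diagonality of the coefficient matrix, giving $\binom{n+1}{2}+n+1=\binom{n+2}{2}$ when $s_1+s_2\neq 0$ and $s_1s_2\neq 0$) is correct and matches the paper's count, and your reduction of $\rank(Qp)$ to $\rank(M^{*}DM)$ is sound. But there is a genuine gap: the indefinite case, which you defer to ``adapting the method of D'Angelo with signature bookkeeping,'' is the entire content of the theorem, and the mechanism you name cannot close it. Note first that the definite case you settle is nearly vacuous here: $Q$ is an arbitrary nonzero real polynomial, so $D$ is generically indefinite even when $s_1,s_2>0$, and when $s_1,s_2<0$ the polynomial $p$ itself has mixed signature (the $\norm{z}^2$-block carries sign $s_1+s_2<0$), so $D$ is indefinite before $Q$ even enters. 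More fatally, the obstruction you propose --- that the polarized factor $\inner{z,w}+s_i$ is irreducible and divides no pure tensor $g(z)h(\bar w)$ --- is blind to the signs of $s_1,s_2$: for $(\norm{z}^2+1)(\norm{z}^2-1)=\norm{z}^4-1$ both polarized factors are irreducible and divide no pure tensor, yet the rank collapses to $2$, as the paper itself points out. Since the theorem is false for opposite signs, any proof of the indefinite case must use the same-sign hypothesis quantitatively, and nothing in your sketch does; the closing remark that the hypothesis ``keeps the definite top block intact'' is an aspiration, not an argument.

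The paper's actual route, none of which appears in your proposal, is combinatorial rather than signature-theoretic. One fixes a monomial order so that multiplication by $\abs{z_j}^2$ moves entries of the coefficient matrix along diagonals, bounds the rank from below by the number of nonzero entries on the extreme superdiagonal, and thereby reduces (via the substitution $x_j=\abs{z_j}^2$) to a ``diagonal'' sparsity claim: $Q(x)(x_1+\cdots+x_n+s_1)(x_1+\cdots+x_n+s_2)$ has at least $\binom{n+2}{2}$ nonzero coefficients. The genuinely new work in the paper is the proof of this claim for general same-sign $s_1,s_2$: for $n=1$, Descartes' rule of signs (this is exactly where the sign hypothesis enters --- after normalizing, the product has two positive real roots, forcing two sign changes and hence three nonzero coefficients); for $n=2$, the vector field $L=\frac{\partial}{\partial x_1}-\frac{\partial}{\partial x_2}$, which is tangent to both factors, combined with a transport-equation argument to manufacture a mixed monomial; and for general $n$, a careful monomial count over pairs of variables. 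To repair your proposal you would need to supply this (or some equivalent) quantitative argument; your two observations --- $\dim W\geq\binom{n+2}{2}$ and full column rank of $M$ --- control only the definite case and give no lower bound on $\rank(M^{*}DM)$ once $D$ is indefinite.
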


The proof follows by roughly the same argument as in 
\cite{dangelo-2012-pfisters} which proved the special case
when $s_1=s_2=1$, but with an 
arbitrary number of factors.  What must be newly proved here
is a ``diagonal version'' of the theorem.
We only have this proof for two factors, which is sufficient for our purposes.
We conjecture that the result holds for any number of factors.
Note that the result is not true as stated if the numbers $s_j$ have opposite
signs.  For example,
$(\abs{z}^2+1)(\abs{z}^2-1) = \abs{z}^4-1$
is of rank 2 and not $\binom{n+2}{2} =\binom{3}{2} = 3$.

\begin{proof}
The main new claim that must be proved is the ``diagonal version'' of
this theorem.  That is, for real polynomials of
real variables $x_1,\ldots,x_n$, we have
\begin{multline}
    \# \bigl(
      Q(x) (x_1+\cdots+x_n + s_1) (x_1+\cdots+x_n + s_2) 
    \bigr)
\\
    \geq
    \# \bigl(
      (x_1+\cdots+x_n + s_1) (x_1+\cdots+x_n + s_2)
    \bigr)
    = \binom{n+2}{2},
\end{multline}
where $\#$ means the number of nonzero coefficients in the given
polynomials when they are expanded.
We note that if we prove the claim for positive $s_1,s_2$,
then it follows for negative $s_1,s_2$ or vice-versa by replacing
$x$ by $-x$, factoring out a $(-1)^2$ and applying the claim
for positive $s_1,s_2$.

First suppose that $n=1$.  For this case, assume $s_1$ and $s_2$
are both negative.
The polynomial
\begin{equation}
R(x) = Q(x) (x + s_1) (x + s_2) 
\end{equation}
has at least $2$ positive real roots.  By Descartes' rule of signs,
there must be at least $2$ sign changes among the coefficients of
the polynomial and therefore there must be at least $3$ nonzero
coefficients, which is precisely the claimed result for $n=1$.

Next, let us suppose $n=2$.
Again let $R(x) = Q(x) (x_1 + x_2 + s_1) (x_1 + x_2 + s_2)$.
Write $R(x) = x_1^j x_2^k \widetilde{R}(x)$ (and similarly
$Q(x) = x_1^j x_2^k \widetilde{Q}(x)$)
where $\widetilde{R}(x_1,0)$ and $\widetilde{R}(0,x_2)$ are both
not identically zero.
Applying
the one variable version, we find that both
$\widetilde{R}(x_1,0)$ and $\widetilde{R}(0,x_2)$
have at least $3$ monomials (which are pure powers of $x_1$ or $x_2$
respectively).  One
of these terms can be the constant, and therefore, we
have $1+2n$ terms in $\widetilde{R}(x)$ which are all pure powers.
Next we will show that
there is a term in $\widetilde{R}(x)$ that involves both $x_1$ and $x_2$.
Note that the vector field
$L =\frac{\partial}{\partial x_1}-\frac{\partial}{\partial x_2}$
is tangent to each $(x_1+x_2+s_j)$.  Therefore, if we apply $L$ $\nu$ number
of times, we get
\begin{equation}
L^\nu \widetilde{R}(x) = 
\bigl(L^\nu \widetilde{Q}(x)\bigr)
(x_1+x_2 + s_1) (x_1+x_2 + s_2) .
\end{equation}
As $\widetilde{Q}$ is a polynomial, $L^\nu \widetilde{Q}$ must be
identically zero for some $\nu$, so take $\nu$ such that
$L^{\nu-1} Q$ is not zero but $L^{\nu} Q \equiv 0$.
Then
$L^{\nu-1}\widetilde{Q}$ is a solution to the transport equation $Lu = 0$,
and so it is a function of $x_1+x_2$, that is,
$L^{\nu-1}\widetilde{Q}(x) = S(x_1+x_2)$ and by the choice of $\nu$,
$S$ is not identically zero.
But then
\begin{equation}
L^{\nu-1}\widetilde{R}(x) = 
S(x_1+x_2)
(x_1+x_2 + s_1) (x_1+x_2 + s_2)
\end{equation}
is a polynomial in $(x_1+x_2)$ of degree at least 2 and hence
it must have a mixed monomial.
Since $L^{\nu-1}\widetilde{R}(x)$ has a mixed monomial, then
so does $\widetilde{R}(x)$ as $L$ cannot manufacture mixed monomials out
of pure ones.
Therefore, $R(x)$ has at least $1+2\cdot 2+1 = 6 = \binom{2+2}{2}$ terms.
In particular, we have the following terms:

\begin{statement} \label[statement]{statement:distributionofterms}
There are at least 3 terms with $x_1^jx_2^m$ for
$m \geq k$.  There are at least 3 terms with
$x_1^mx_2^k$ for $m \geq j$.  There is at least one term
with $x_1^\ell x_2^m$ for $\ell > j$ and $m > k$.
There are no terms with 
$x_1^\ell x_2^m$ for $\ell < j$ or $m < k$.
\end{statement}

We now attack the general problem for $n > 2$.
Pick any pair of variables, without loss of generality it is $x_1$
and $x_2$.  Write $x=(x_1,x_2,x')$ and fix one of the standard monomial orders on the remaining
variables $x'$, such that multiplying a monomial in $x'$
by any one of the variables $x_3,\ldots,x_n$ moves the monomial further
in the ordering.
Using multinomial notation, write
\begin{equation}
R(x) = \sum_{\alpha} r_{\alpha}(x_1,x_2) (x')^\alpha 
\quad\text{and} \quad
Q(x) = \sum_{\alpha} q_{\alpha}(x_1,x_2) (x')^\alpha .
\end{equation}
we find the smallest monomial $(x')^\mu$ in the given monomial order
for which the $q_\alpha$ is not identically zero.
We then have that
\begin{equation}
r_{\mu}(x_1,x_2) = q_\mu(x_1,x_2) (x_1+x_2+s_1)(x_1+x_2+s_2) .
\end{equation}
All the monomials of $r_\mu(x_1,x_2) (x')^\mu$ must appear in $R(x)$
by minimality of $\mu$.
We thus apply the 
two-dimensional result to find that there must be
in fact 6 such monomials, and the distribution of
these monomials is given in \cref{statement:distributionofterms}.
We use this result for every pair of variables to count monomials.
For every pair there is one monomial that we might be also
counting for other pairs, so we count it only once.
Then there are two monomials corresponding to each variable
that may appear in the count for every pair that includes
this variable.  Finally, for every pair there is at least one
monomial that does not appear in the count for every other pair.
In other words,
we have at least
\begin{equation}
1+2n + \binom{n}{2} = \binom{n+2}{n}
\end{equation}
distinct monomials in $R(x)$.  The claim is proved.

Now that we have the claim, the theorem follows in the same way
as in the proof of the polynomial version of Theorem 2.1 in
\cite{dangelo-2012-pfisters}.  That is, there exists some ordering
of monomials so that if we write out the
matrix of coefficients of the expressions involved,
multiplication by $\abs{z_j}^2$ moves along diagonals.  The rank of the
matrix of coefficients is the rank of the function.  The rank of a matrix
can be estimated from below by considering the number of nonzero terms
in the superdiagonal that is
furthest up and to the right that has nonzero terms.  Up to multiplication
with a monomial, the terms on this diagonal are polynomials
in $\abs{z_j}^2$, and so by replacing $x_j = \abs{z_j}^2$ we obtain
the situation of the claim and the proof follows.  See
\cite{dangelo-2012-pfisters} for more details.
\end{proof}

We can now prove
\cref{thm:gapthm1},
that is,
if $f \colon \A_{n,r} \to \A_{N,R}$ ($n \geq 2$) is a proper holomorphic map
and $n < N < \binom{n+1}{2}$, then
$f$ is unitarily equivalent to an affine embedding
\eqref{eq:linearembed}
and $r \leq R < 1$.

\begin{proof}[Proof of \cref{thm:gapthm1}]
    By \cref{theorem:annulimapsrational}, 
    $f$ is a rational map that takes 
    $S^{2n-1}$ to $S^{2N-1}$ and $r S^{2n-1}$ to $R S^{2N-1}$.
    Write $f = \frac{p}{q}$ in lowest terms.
    By \cref{lemma:quotientlemma} we have
    \begin{equation}
        \norm{p(z)}^2
        = \Bigl(1 + b \bigl(\norm{z}^2-1\bigr)\Bigr) \abs{q(z)}^2 
        + Q(z,\bar{z}) \bigl(\norm{z}^2-1\bigr) \bigl(\norm{z}^2-r^2\bigr)
    \end{equation}
    for a real polynomial $Q(z, \bar{z})$ and $b = \frac{1-R^2}{1-r^2}$.

    If $Q \equiv 0$, then since $\frac{p}{q}$ is in lowest terms, we must have
    $q$ is constant, so we may assume $q \equiv 1$.
    Then 
    $\norm{p(z)}^2 = 1 + b \bigl(\norm{z}^2-1\bigr)$
    is of bidegree $(1, 1)$ (degree 1 in $z$ and degree 1 in $\bar{z}$).
    Therefore, $\deg f = 1$.
    \cref{prop:linearembeddings} implies that $f$ is an affine embedding
    and that $R \geq r$.

    Suppose $Q \not\equiv 0$, we will get a contradiction.
    By applying \cref{prop:Nfsimplify}, we reduce to the case when
    $N=N_f$.
    The dimension $N$ is then the number of components of $f$, which is
    equal to the number of components of $p$, which we can assume are linearly
    independent as $N_f = N$.  Therefore, $N$ is equal to the rank
    of $\norm{p(z)}^2$.
    \cref{thm:hermitian} gives us
    \begin{equation}
    \begin{split}
    N &= \rank \norm{p(z)}^2 \\
        &\geq
        -\rank \Bigl(1 + b \bigl(\norm{z}^2-1\bigr)\Bigr) \abs{q(z)}^2 
        + \rank Q(z,\bar{z}) \bigl(\norm{z}^2-1\bigr) \bigl(\norm{z}^2-r^2\bigr) \\
        &\geq
        -\rank \Bigl(1 + b \bigl(\norm{z}^2-1\bigr)\Bigr) 
        + \rank \bigl(\norm{z}^2-1\bigr) \bigl(\norm{z}^2-r^2\bigr) \\
        &= -\rank \bigl( b \norm{z}^2 + (1 - b) \bigr)
        + \rank \bigl( \norm{z}^4 - (1 + r^2) \norm{z}^2 + r^2 \bigr) \\
        &\geq - \binom{n}{1} - \binom{n-1}{0}
        + \binom{n+1}{2} + \binom{n}{1} + \binom{n-1}{0}
        = \binom{n+1}{2}.
    \end{split}
    \end{equation}
    In the inequalities we have used the facts that
    $\rank (A+B) \geq - (\rank A) + \rank B$ and that
    $\rank \bigl(\abs{q(z)}^2 A(z,\bar{z})\bigr) = \rank A(z,\bar{z})$, both of which 
    follow immediately from the definition of rank.
    By hypothesis, we find a contradiction as we assumed
    $N_f < \binom{n+1}{2}$.

    We therefore obtained $N_f = n$ and that the degree is $1$.
    The conclusion follows from \cref{prop:deg1unitary} or
    \cref{prop:linearembeddings} depending on whether $N > n$
    or $N = n$.
    The interval is sharp; when 
    $N = \binom{n+1}{2}$, besides the affine embeddings we
    can also have the map $H_2$ (depending on $R$),
    which is clearly not unitarily
    equivalent to an affine embedding.
\end{proof}

We should remark that
for specific $\A_{n,r}$ and $\A_{\binom{n+1}{2},R}$ we either get
the affine embeddings if $R \geq r$, or we get the homogeneous
map $H_2$ if $R=r^2 < r$.  For general $n$, it is not clear if other
maps exist when $N=\binom{n+1}{2}$, for perhaps different $R$.
For $n=2$, the homogeneous $H_2$ and the affine embeddings
are the only proper maps from 
$\A_{2,r}$ to $\A_{\binom{2+1}{2},R}=\A_{3,R}$.
See \cref{theorem:farananalogue}.


\section{Homogeneous maps via the general hyperplane rank} \label[section]{sec:homog}

To prove the theorem on classification of homogeneous maps,
we will prove a more general result about rational maps
taking two spheres to spheres.  To state the result, it may be easiest to work
in the projective space $\bP^n$.  The automorphism group $\Aut(\bP^n)$ of $\bP^n$
is the set of linear fractional transformations or LFTs.  Consider
some embedding $\C^n \subset \bP^n$ and then we can consider the sphere
$S^{2n-1}$ as a subset of $\bP^n$.  When we refer to affine coordinates,
we refer to this embedding and the coordinates $(z_1,\ldots,z_n)$.
We say $\sS$ is an \emph{LFT sphere}
if there exists an LFT $\varphi \in \Aut(\bP^n)$ so that
$\sS = \varphi(S^{2n-1})$.

Our first result is about rational maps taking two LFT spheres to
LFT spheres.  The definition of the general hyperplane rank $k_f$ for a rational map
$f \colon \bP^n \dashrightarrow \bP^N$ in the projective setting is exactly
analogous to the affine setting; it is the smallest dimension $k$ of
a $k$-plane in $\bP^N$ that contains $f(H)$ for a general hyperplane
$H \subset \bP^n$.  In the proof we will work in homogeneous coordinates on
the projective space, and
so a general hyperplane in 
$H \subset \bP^n$ is an $n$-dimensional linear subspace in $\C^{n+1}$.

We use $z = (z_0,\ldots,z_n)$ for the homogeneous coordinates 
on $\bP^n$.
To write a rational map $f \colon \bP^n \dashrightarrow \bP^N$ in
homogeneous coordinates we start with $f = \frac{(p_1,\ldots,p_N)}{q}$
and order it as $(q,p_1,\ldots,p_N)$ and homogenize, that is, we make the
denominator the $0$th component.

The sphere $S^{2n-1} \subset \C^n \subset \bP^n$
in our chosen affine embedding when put into homogeneous coordinates
is given by the equation
\begin{equation}
\inner{J_n z,z}  = 0 , \quad \text{where} \quad
J_n =
\operatorname{diag}(-1,1,\ldots,1) =
\begin{bmatrix}
-1      & 0      & \cdots & 0      \\
 0      & 1      & \cdots & 0      \\
 \vdots & \vdots & \ddots & \vdots \\
 0      & 0      & \cdots & 1      \\
\end{bmatrix} .
\end{equation}
Here $\inner{\cdot,\cdot}$ is the standard sesquilinear inner product on 
$\C^{n+1}$.
As LFTs are invertible matrices,
any other LFT sphere can then be written as
\begin{equation}
\inner{T^*J_nT z,z}  = 0 ,
\end{equation}
for an invertible $T$, that is, it is $\inner{A z,z}  = 0$ for some
hermitian $(n+1) \times (n+1)$ matrix $A$ with $n$ positive and $1$ negative
eigenvalues.  If $\varphi$ is the LFT denoted by the matrix $T$,
then $\inner{T^*J_nT z,z}  = 0$ represents the sphere
$\varphi^{-1}(S^{2n-1})$.
We can also do the same in $\C^{N+1}$ on the target, using instead the
matrix $J_N$.

\begin{theorem} \label[theorem]{thm:lft}
Suppose $f \colon \bP^n \dashrightarrow \bP^N$ is a rational map
that takes two distinct LFT spheres to LFT spheres and $k_f = N - 1$.
Then $f \circ \alpha = \beta \circ f$ 
for some LFTs $\alpha \in \Aut(\bP^n)$ and $\beta \in \Aut(\bP^N)$
neither of which is the identity.

More precisely, using the coordinates above, if one of the spheres on the 
source (resp.\ the target) is given by $J_n$ (resp.\ $J_N$) and the other
by a hermitian matrix $A$ (resp.\ $B$), then $\alpha$ is
represented by the matrix $J_n A$ and $\beta$ is represented by
the matrix $J_N B$.
\end{theorem}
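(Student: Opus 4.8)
The plan is to extract, from the condition $k_f = N-1$, a well-defined \emph{dual map} on hyperplanes and to compute it in two ways, once using each pair of spheres. First I would pass to homogeneous coordinates, writing $f = [f_0 : \cdots : f_N]$ with the $f_j$ homogeneous of a common degree $d$, and label the spheres so that $f$ carries the source sphere $\inner{J_n z,z} = 0$ to the target sphere $\inner{J_N \zeta,\zeta} = 0$ and the source sphere $\inner{A z,z} = 0$ to the target sphere $\inner{B \zeta,\zeta} = 0$. Exactly as in the Segre-variety computation in the proof of \cref{prop:kfNm1}, each sphere-to-sphere condition polarizes to a bihomogeneous identity
\begin{equation*}
\inner{J_N f(z), f(w)} = \Phi(z,\bar w)\, \inner{J_n z, w},
\qquad
\inner{B f(z), f(w)} = \Psi(z,\bar w)\, \inner{A z, w},
\end{equation*}
for bihomogeneous $\Phi,\Psi$. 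Reading the first identity with $w$ fixed shows that $f$ maps the hyperplane with normal $J_n w$ into the hyperplane with normal $J_N f(w)$; writing $\nu = J_n w$ (so $w = J_n \nu$, as $J_n^2 = I$) this says the hyperplane with normal $\nu$ is carried into the hyperplane with normal $J_N f(J_n \nu)$. The second identity, with $\nu = A w$, says the hyperplane with normal $\nu$ is carried into the hyperplane with normal $B f(A^{-1}\nu)$; here $A$ is invertible since it is nondegenerate.

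Now I would invoke $k_f = N-1$. For a general hyperplane $H$ the span of $f(H)$ has dimension exactly $N-1$, so there is a \emph{unique} hyperplane of $\bP^N$ containing $f(H)$; since the normals $J_n w$ (resp.\ $A w$) range over all of $\C^{n+1}$ as $w$ does, a general $\nu$ is realized in both parametrizations and avoids the indeterminacy locus. For such $\nu$ the two hyperplanes produced above must coincide, giving the projective identity
\begin{equation*}
J_N f(J_n \nu) \sim B f(A^{-1}\nu),
\end{equation*}
valid on an open dense set and hence, both sides being rational, everywhere. Substituting $\nu \mapsto A\nu$ and multiplying by $J_N$ (using $J_N^2 = I$) yields $f(J_n A\, \nu) \sim J_N B\, f(\nu)$, that is $f \circ \alpha = \beta \circ f$ with $\alpha = [J_n A]$ and $\beta = [J_N B]$, which is precisely the claimed relation together with the stated matrices.

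Finally I would check that neither LFT is the identity. Projectively $\alpha = [J_n A]$ is the identity exactly when $J_n A = cI$, i.e.\ $A = c J_n$, i.e.\ the two source spheres coincide, which is excluded by hypothesis. Likewise $\beta = [J_N B]$ is the identity exactly when the two target spheres coincide. When $\deg f = 1$ this is immediate: the two identities force $\Phi,\Psi$ to be constants, and if the target spheres coincided ($B = c J_N$) we would obtain $c\,\Phi\,\inner{J_n z,w} = \Psi\,\inner{A z,w}$ for constants, forcing $A \sim J_n$ and contradicting distinctness of the source spheres; in the setting where the theorem is applied the target spheres are distinct because they have different radii. I expect the main obstacle to be the middle step: justifying rigorously that $k_f = N-1$ makes the containing hyperplane unique and that the two normal formulas refer to the \emph{same} hyperplane for the \emph{same} $\nu$. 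This requires careful bookkeeping of the two parametrizations $w \mapsto J_n w$ and $w \mapsto A w$ of the space of normals, control of the pole and indeterminacy loci so the equality holds on a dense set, and the upgrade from that dense set to a global rational identity.
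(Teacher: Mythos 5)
Your proposal is correct and follows essentially the same route as the paper's proof: homogenize, polarize the two sphere-preservation identities so that Segre hyperplanes map into Segre hyperplanes, use $k_f = N-1$ to conclude that the hyperplane containing $f(H)$ is unique and hence that $J_N f(J_n \nu)$ and $B f(A^{-1}\nu)$ are proportional, and read off $\alpha = [J_n A]$ and $\beta = [J_N B]$ (your $J_N B$ agrees with the paper's proof, where $T = J_N B$; the ``$J_n B$'' in the theorem statement is a typo). Your added care with genericity of $\nu$ and your explicit check that $\alpha$ and $\beta$ are not the identity go slightly beyond what the paper writes down, but the core argument is identical.
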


The main idea behind the proof is that $k_f = N-1$ means that the map $f$
induces a map of the corresponding Grassmannians.  To figure out the map one
uses the Segre variety of the LFT spheres.  But now we have
two LFT spheres that go to LFT spheres and the second LFT sphere pair means that $f$
induces a different map of
Grassmannians.  Comparing the two maps gives us the corresponding $\alpha$
and $\beta$.

\begin{proof}
As we said we will work in
homogeneous coordinates, that is, in $\C^{n+1}$
and $\C^{N+1}$.
We will write $z$ and $w$ for points in $\C^{n+1}$ and $Z$ and $W$ for points in $\C^{N+1}$.
Starting with a rational map $f$, we clear denominators and
homogenize to represent $f$ by a homogeneous polynomial map
$F  = (F_0,F_1,\ldots,F_N) \colon \C^{n+1} \to \C^{N+1}$ where $F_0$
is the denominator.
After applying an LFT we can assume that one of the matrices on
the source corresponds to the standard sphere $S^{2n-1}$, that is, it is given by
$\inner{J_n z,z}  = 0$, and similarly for the target.
For any hermitian matrix $M$ and a point $w$, we write the
Segre variety of the homogenized LFT sphere given by $\inner{Mz,z}=0$ as
\begin{equation}
\Sigma_w^{M} = \{ z : \inner{Mz,w} = 0 \} .
\end{equation}

Write $z = (z_0,z')$ where $z'$ refers to the affine coordinates.
Then the fact that $f = \frac{p}{q}$ takes the standard sphere to the standard sphere means
$\inner{p(z'),p(w')} - q(z')\overline{q(w')} = g(z',\bar{w}') \left( \inner{z',w'}-1 \right)$
for a polynomial $g$.  In homogeneous coordinates, this translates to
\begin{equation}
\inner{J_N F(z) , F(w)} =
G(z,\bar{w}) \inner{J_nz,w} .
\end{equation}
This equation implies that
\begin{equation}
F(\Sigma_w^{J_n}) \subset \Sigma_{F(w)}^{J_N} .
\end{equation}
We are assuming that
we have a homogenized LFT sphere in $\C^{n+1}$ represented by the
$(n+1) \times (n+1)$ hermitian matrix $A$
that $F$ takes to a homogenized LFT sphere in $\C^{N+1}$ represented
by the
$(N+1) \times (N+1)$ hermitian matrix $B$.  Therefore, we also have
\begin{equation}
F(\Sigma_w^{A}) \subset \Sigma_{F(w)}^{B} .
\end{equation}
Let 
$S = (AJ_n^{-1})^* = J_nA$.
As
\begin{equation}
\inner{Az,w} = \inner{AJ_n^{-1}J_n z,w} =
\inner{J_n z,(AJ_n^{-1})^*w} ,
\end{equation}
we find that $\Sigma_w^A=\Sigma_{Sw}^{J_n}$, and so
\begin{equation}
F(\Sigma_w^{A}) \subset \Sigma_{F(w)}^{B}
\quad \text{and} \quad
F(\Sigma_w^{A}) = F(\Sigma_{Sw}^{J_n}) \subset \Sigma_{F(Sw)}^{J_N} .
\end{equation}
The condition that $k_f = N-1$ means in homogeneous coordinates
that given a linear $n$-dimensional subspace $V \subset\C^{n+1}$,
the linear span of $F(V)$ in $\C^{N+1}$ is $N$-dimensional.
The Segre variety $\Sigma_w^A$ is an $n$-dimensional subspace,
and $\Sigma_{F(w)}^{B}$ and $\Sigma_{F(Sw)}^{J_N}$ are $N$-dimensional
subspaces.  In other words, we find that
\begin{equation}
\Sigma_{F(Sw)}^{J_N} = \Sigma_{F(w)}^B .
\end{equation}
That is, using coordinates $(Z_0,\ldots,Z_N)$ in $\C^{N+1}$,
we have that $\inner{J_N Z,F(Sw)} = 0$ and  
$\inner{B Z,F(w)} = 0$ denote the same subspace.  As this is true
for every $w$, we have that
$\inner{J_N Z,F(Sw)} = \lambda \inner{B Z,F(w)}$, where $\lambda$ must be
a constant as the two functions are both degree-$1$ homogeneous in $Z$ and
degree-$d$ homogeneous in $w$ (assuming $F$ is of degree $d$).
We can rewrite this equation
as $J_N^* F(Sw) = \lambda B^*F(w)$, or in other words,
\begin{equation}
F(Sw) = \lambda J_N^{-*} B^*F(w) .
\end{equation}
Note that $J_N^{-*} = J_N$ and $B=B^*$, so let $T = J_N^{-*} B^* = J_N B$ and then
we have that for all $w$,
\begin{equation}
F(Sw) = \lambda T F(w) .
\end{equation}
If we take $\alpha$ to be the LFT given by the matrix $S$ and $\beta$ be the
LFT given by the matrix $T$, then we obtain in affine coordinates that
\begin{equation}
f \circ \alpha = \beta \circ f ,
\end{equation}
as required.  The formulas for $\alpha$ and $\beta$ follow.
\end{proof}

In particular, $f$ now also takes the LFT sphere $\alpha^{-1}(S^{2n-1})$ to 
the LFT sphere $\beta^{-1}(S^{2N-1})$.
It turns out that these spheres need
not be the ones we started with.
By iterating, we find that
for any $k \in \Z$,
\begin{equation}
f \circ \alpha^k = \beta^k \circ f .
\end{equation}
So we also get that 
$f$ takes
$\alpha^{-k}(S^{2n-1})$ to 
$\beta^{-k}(S^{2N-1})$ for any $k$.
In our particular case, when
all spheres involved are centered at zero, $f$ takes
infinitely many spheres centered at zero
of various radii to spheres centered at zero.
In particular, in the first step, whenever it takes the standard sphere
to the standard sphere and the $r$-sphere to the $R$-sphere, 
we find that it also takes the $r^2$-sphere to the $R^2$-sphere and so on.

\begin{lemma} \label[lemma]{lemma:Nm1spherestospheres}
Suppose $n \geq 2$ and $f \colon \C^n \dashrightarrow \C^N$ is a
rational map such that $f(S^{2n-1}) \subset S^{2N-1}$ and for
some $r, R < 1$ we have that 
$f(rS^{2n-1}) \subset R S^{2N-1}$.
Suppose also that $k_f = N-1$.
Then there are two sequences of distinct numbers
$\{r_k\}_{k=1}^\infty$ and $\{ R_k \}_{k=1}^\infty$ converging to zero
so that 
$f(r_kS^{2n-1}) \subset R_k S^{2N-1}$ for every $k$.
\end{lemma}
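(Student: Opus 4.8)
The plan is to apply \cref{thm:lft} to the two spheres centered at the origin and then iterate the resulting functional equation. First I would pass to homogeneous coordinates on $\bP^n$ and $\bP^N$ and identify the two source LFT spheres together with their images. The unit sphere $S^{2n-1}$ is given by $J_n$ and $S^{2N-1}$ by $J_N$. Homogenizing $\norm{z}^2 = r^2$ yields $-r^2\abs{z_0}^2 + \sum_{i=1}^n \abs{z_i}^2 = 0$, so the sphere $rS^{2n-1}$ is given by the hermitian matrix $A = \operatorname{diag}(-r^2,1,\ldots,1)$, and likewise $RS^{2N-1}$ is given by $B = \operatorname{diag}(-R^2,1,\ldots,1)$. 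Since $r < 1$ the two source spheres are distinct, so the hypotheses of \cref{thm:lft} are met (we are given $k_f = N-1$), and we obtain LFTs $\alpha$ and $\beta$ with $f \circ \alpha = \beta \circ f$, where $\alpha$ is represented by the matrix $J_n A$ and $\beta$ by $J_N B$.

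The crux is to recognize $\alpha$ and $\beta$ as dilations. We compute $J_n A = \operatorname{diag}(r^2,1,\ldots,1)$, and in the affine chart $z_0 = 1$ the LFT with this matrix sends $w \mapsto w/r^2$; that is, $\alpha$ is dilation by $1/r^2$ and $\alpha^{-1}$ is contraction by $r^2$. Similarly $\beta$ is dilation by $1/R^2$ and $\beta^{-1}$ is contraction by $R^2$. The functional equation then rewrites as $f \circ \alpha^{-1} = \beta^{-1} \circ f$, and iterating (exactly as in the discussion following \cref{thm:lft}) gives $f \circ \alpha^{-k} = \beta^{-k} \circ f$ for every $k \in \N$.

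To finish, I would apply this identity to the unit sphere. Since $\alpha^{-k}(S^{2n-1}) = r^{2k} S^{2n-1}$, $\beta^{-k}(S^{2N-1}) = R^{2k} S^{2N-1}$, and $f(S^{2n-1}) \subset S^{2N-1}$, we obtain
\[
f\bigl(r^{2k} S^{2n-1}\bigr) = \beta^{-k}\bigl(f(S^{2n-1})\bigr) \subset \beta^{-k}(S^{2N-1}) = R^{2k} S^{2N-1} .
\]
Setting $r_k = r^{2k}$ and $R_k = R^{2k}$ then yields the claim: because $0 < r,R < 1$, these are strictly decreasing sequences of distinct positive numbers converging to $0$. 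I do not expect a genuine obstacle here, as the substance is carried entirely by \cref{thm:lft}; the only points that need care are the bookkeeping that identifies $J_n A$ and $J_N B$ as contractions by $r^2$ and $R^2$ (so that the radii genuinely shrink to zero rather than grow), and the minor observation that $f$, being a rational sphere map with no poles in the closed unit ball, is in fact defined on every small sphere $r^{2k} S^{2n-1}$ so that the inclusion above is meaningful.
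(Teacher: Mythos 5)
Your proposal is correct and matches the paper's own proof essentially step for step: the same matrices $A = \operatorname{diag}(-r^2,1,\ldots,1)$ and $B = \operatorname{diag}(-R^2,1,\ldots,1)$, the same identification of $\alpha$ and $\beta$ (via $J_nA$ and $J_NB$) as scalings by $\frac{1}{r^2}$ and $\frac{1}{R^2}$, and the same iteration of $f \circ \alpha^k = \beta^k \circ f$ to produce $r_k = r^{2k}$ and $R_k = R^{2k}$. Your closing remarks (distinctness of the two source spheres since $r<1$, and $f$ being defined on the small spheres) are harmless extra care beyond what the paper records.
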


\begin{proof}
We set things up as in the proof of \cref{thm:lft}, and so the matrices $A$ and $B$
are given as
\begin{equation}
A =
\begin{bmatrix}
-r^2    & 0      & \cdots & 0      \\
 0      & 1      & \cdots & 0      \\
 \vdots & \vdots & \ddots & \vdots \\
 0      & 0      & \cdots & 1      \\
\end{bmatrix}
\quad\text{and}\quad
B =
\begin{bmatrix}
-R^2    & 0      & \cdots & 0      \\
 0      & 1      & \cdots & 0      \\
 \vdots & \vdots & \ddots & \vdots \\
 0      & 0      & \cdots & 1      \\
\end{bmatrix} .
\end{equation}
Then
\begin{equation}
S = J_n A = 
\begin{bmatrix}
 r^2    & 0      & \cdots & 0      \\
 0      & 1      & \cdots & 0      \\
 \vdots & \vdots & \ddots & \vdots \\
 0      & 0      & \cdots & 1      \\
\end{bmatrix}
\quad
\text{and}
\quad
T = J_N B = 
\begin{bmatrix}
 R^2    & 0      & \cdots & 0      \\
 0      & 1      & \cdots & 0      \\
 \vdots & \vdots & \ddots & \vdots \\
 0      & 0      & \cdots & 1      \\
\end{bmatrix} .
\end{equation}
As we said what we have is that $f$ takes $\alpha^{-1}(S^{2n-1})$ to
$\beta^{-1}(S^{2N-1})$.  In affine coordinates,
$\alpha$ is simply a scaling by $\frac{1}{r^2}$ and 
$\beta$ is a scaling by $\frac{1}{R^2}$, and so
$\alpha^{-1}(S^{2n-1}) = r^2 S^{2n-1}$ and 
$\beta^{-1}(S^{2N-1}) = R^2 S^{2N-1}$.  By iterating we get
the desired sequence.
\end{proof}

We are now ready to prove \cref{thm:kfN1impliesHd} using the results
of our previous work in \cite{helal-2025-proper}.

\begin{proof}[Proof of \cref{thm:kfN1impliesHd}]
One direction of the theorem is clear and follows from the discussion so far.
What is left to prove is that if $f \colon \A_{n,r} \to \A_{N,R}$ is a
proper holomorphic map and $k_f = N-1$, then $f = U H_d$ for some unitary
$U$ and an integer $d$.  By \cref{theorem:annulimapsrational}, $f$ is
rational and takes spheres to spheres as in
\cref{lemma:Nm1spherestospheres}.  Therefore, $f$ takes infinitely many
spheres centered at zero to spheres centered at zero.
In \cite{helal-2025-proper}, we have proved that such a rational map
must, up to a postcomposition with a unitary,
be a direct sum of scaled maps $H_d$ for various $d$ or zeros.
So for some unitary $U$, we can assume that
\begin{equation}
f = U(c_1 H_{d_1} \oplus \cdots \oplus c_\ell H_{d_{\ell}} \oplus 0 ),
\end{equation}
where we can assume that $d_1, \ldots, d_\ell$ are distinct.
Since we are assuming that $k_f = N-1$, we must have that $N_f = N$
and hence there are no zero components.
Then, via the argument in the proof of \cref{prop:kfHd},
we can see that $k_f = N-\ell$.  Hence $\ell = 1$,
and $f = U H_d$ as required.
\end{proof}

Let us now prove \cref{theorem:farananalogue}
as an application of
\cref{thm:kfN1impliesHd}.
We already have that proper holomorphic maps from $\A_{2,r}$ to $\A_{3,R}$
are rational.  If the degree of the map is 1, then
via \cref{prop:linearembeddings},
it is
unitarily equivalent to the affine embedding
\begin{equation}
(z_1,z_2) \mapsto
\left(
\sqrt{\frac{1-R^2}{1-r^2}} \, z_1 ,
\sqrt{\frac{1-R^2}{1-r^2}} \, z_2 ,
\sqrt{\frac{R^2-r^2}{1-r^2}} \right),
\end{equation}
where $R \geq r$.  In this case, clearly $N_f=2$ and $k_f=1$.

The theorem will therefore follow if we can show that any map
of degree greater than 1 is unitarily equivalent to $H_2$.
This result follows by
Faran's~\cite{faran-1982-maps} classification of
proper maps of $\B_2$ to $\B_3$.  The following lemma finishes the
proof of \cref{theorem:farananalogue}.

\begin{lemma}
Suppose that $f \colon \A_{2,r} \to \A_{3,R}$ is a rational
proper map of degree greater than 1.  Then $f$ is unitarily
equivalent to the homogeneous map $H_2$.
\end{lemma}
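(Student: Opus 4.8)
The plan is to lift $f$ to a proper ball map, apply Faran's classification, and then exploit the spherical invariance of $k_f$ to reduce everything to \cref{thm:kfN1impliesHd}. First I would use \cref{theorem:annulimapsrational} to extend $f$ to a proper rational map $\B_2 \to \B_3$. Since a rational proper ball map has no poles on the sphere (by the Cima--Suffridge result \cite{cima-1990-boundary}) and none inside the ball, this extension is real-analytic up to the boundary, so Faran's theorem \cite{faran-1982-maps} applies: $f$ is spherically equivalent to one of the linear embedding $L=(z_1,z_2,0)$, the Whitney map $W=(z_1,z_1z_2,z_2^2)$, the homogeneous map $H_2=(z_1^2,\sqrt{2}\,z_1z_2,z_2^2)$, or the cubic $C=(z_1^3,\sqrt{3}\,z_1z_2,z_2^3)$ (the precise constants in $W$ and $C$ are irrelevant below, as they only amount to a unitary on the target).

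I would next rule out the linear case using the degree hypothesis. A composition of $L$ with automorphisms of $\B_2$ and $\B_3$ is again a single linear fractional transformation and hence has degree $1$, so spherical equivalence to $L$ would contradict $\deg f > 1$. Therefore $f$ is spherically equivalent to one of $W$, $H_2$, or $C$.

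The heart of the argument is then the computation of $k_f$. Because $k_f$ is invariant under pre- and post-composition with linear fractional transformations, it equals the general hyperplane rank of whichever of $W$, $H_2$, $C$ the map $f$ is equivalent to. For $H_2$ we already know $k_{H_2}=N-1=2$ from \cref{prop:kfHd}. For $W$ and $C$ a direct computation finishes the job: restricting to a generic affine line $\{z_2=az_1+b\}$ and expanding the resulting image as a polynomial curve $c_0+z_1c_1+z_1^2c_2+\cdots$ in $z_1$, one checks in each case that the affine span is exactly two-dimensional. For $W$ the two coefficient vectors $c_1,c_2$ are linearly independent; for $C$ two of the coefficient vectors turn out to be parallel, so the span fails to reach dimension three even though $C$ has degree $3$. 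In every case $k_f=2=N-1$.

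Finally, since $f\colon\A_{2,r}\to\A_{3,R}$ is a proper holomorphic map of annuli with $k_f=N-1$, \cref{thm:kfN1impliesHd} immediately gives $f=UH_d$ for some unitary $U$ and integer $d$; matching target dimensions forces $3=\binom{d+1}{d}=d+1$, so $d=2$ and $f$ is unitarily equivalent to $H_2$. The only genuine friction I anticipate is the bookkeeping in the $k_f$ computations for $W$ and $C$ (in particular noticing that $C$ does not behave like a true cubic when restricted to lines); once $k_f=N-1$ is in hand, \cref{thm:kfN1impliesHd} does all the conceptual work of upgrading Faran's spherical equivalence to the required unitary equivalence.
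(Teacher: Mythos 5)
Your proposal is correct and follows essentially the same route as the paper: extend to a proper ball map, invoke Faran's classification, use the invariance of the degree and of $k_f$ under spherical equivalence to rule out the linear embedding and conclude $k_f = 2 = N-1$ for the three nonlinear models, and then let \cref{thm:kfN1impliesHd} upgrade this to unitary equivalence with $H_2$. Your explicit restriction to lines $z_2 = az_1 + b$ (including the observation that two coefficient vectors of the cubic $(z_1^3,\sqrt{3}\,z_1z_2,z_2^3)$ are parallel, consistent with the a priori bound $k_f \leq N-1$ from \cref{prop:kfNm1}) is just a fleshed-out version of the paper's remark that it suffices to consider the hyperplanes $z_2 = \text{constant}$.
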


\begin{proof}
The map $f$ extends to a proper map of $\B_2$ to $\B_3$.
Faran's result implies that a proper rational map of $\B_2$ to $\B_3$
is spherically equivalent to one of four maps:
the linear embedding $(z_1,z_2,0)$, the Whitney map
$(z_1,z_1z_2,z_2^2)$, the homogeneous $(z_1^2,\sqrt{2}\,z_1z_2,z_2^2)$,
and the map $(z_1^3,\sqrt{3}\, z_1z_2, z_2^3)$.
The issue is that the automorphisms used in Faran's classification
are automorphisms of the ball and these do not preserve the
inner sphere boundary of the annulus.
Spherical equivalence does, however, preserve the degree of the map
and the integer $k_f$.
For the three maps of degree greater than 1, we find $k_f = 2 = N-1$.
To compute $k_f$,
it is sufficient to consider the hyperplanes $z_2 = \text{constant}$.
In other words, if we have an annulus map as in the statement
and of degree greater than 1, then $k_f = 2 = N-1$.
Hence
\cref{thm:kfN1impliesHd} says
that $f$ is unitarily equivalent to $H_2$.
\end{proof}


\section{Complete normal form of degree 2 annulus maps} \label[section]{sec:seconddeg}

In the following theorem, it will be useful to write $H'_2(z_2,\ldots,z_n)$,
that is, the symmetrized homogeneous map in the variables $z_2,\ldots,z_n$:
\begin{equation}
H'_2(z_2,\ldots,z_n) =
\bigl(
z_2^2,
\sqrt{2} z_2z_3, \ldots, \sqrt{2} z_2z_n,
z_3^2,
\sqrt{2} z_3z_4, \ldots, \sqrt{2} z_3z_n,
\ldots,
z_n^2 \bigr) .
\end{equation}
Note that when $n=2$, which is the main case to keep in mind, $H'_2(z_2) =
z_2^2$.

The complete classification of rational degree 2 maps of annuli is then the
following theorem.  One should contrast this with the much simpler
classification for quadratic proper maps of balls, where up to spherical
equivalence, every such map is monomial.  All annuli maps are also ball
maps, and hence all of the following maps are in fact spherically equivalent
to monomial maps, but here we have to classify up to the unitary group.
On the other hand, compared to the ball case,
thanks to \cref{lemma:quotientlemma}, the classification of annuli degree 2 maps follows by
quite tedious but elementary methods, not requiring sophisticated results in linear algebra.  In order to simplify the statement we assume that $N=N_f$.  Therefore, to get all degree two maps in general one would also have to allow compositions with linear embeddings.

\begin{theorem}
\label[theorem]{thm:seconddeg}
Suppose $f \colon \A_{n,r} \to \A_{N,R}$ is a proper rational map of degree 2,
$n \geq 2$, such that $N_f = N$, and write
$b=\frac{1-R^2}{1-r^2}$.  Then $R \geq r^2$ and:

\begin{enumerate}
\item
If $R = r^2$, then $N = \binom{n+1}{2}$ and $f$ is unitarily equivalent to $H_2$.
\item
If $r^2 < R < r$ and $f$ is polynomial, then $N=\binom{n+1}{2}+n$ and
$f$ is unitarily equivalent to the juxtaposition
$\sqrt{1-t} \, H_1 \oplus \sqrt{t} \, H_2$
for exactly one $t \in (0,1)$.
\item
If $r^2 < R < r$ and $f$ is not polynomial, then $N=\binom{n+1}{2}+n$ and
$f$ is unitarily equivalent to a map of the form
\begin{equation}
\label{eq:generaldeg2mapform}
\begin{split}
\Biggl( &
\frac{\sqrt{1-b+Qr^2} + \frac{(1-b)a}{\sqrt{1-b+Qr^2}}z_1}{1+az_1}
,
\frac{\sqrt{ba^2+Q}\,z_1^2 + \frac{ba}{\sqrt{ba^2+Q}}z_1}{1+az_1}
,
\\
& \qquad
\frac{\sqrt{b-Q(1+r^2)} + \frac{ba}{\sqrt{b-Q(1+r^2)}} z_1}{1+az_1}
\otimes (z_2,\ldots,z_n)
,
\\
& \qquad
\frac{\sqrt{ba^2+2Q-\frac{b^2a^2}{ba^2+2Q}} \, z_1}{1+az_1}
\otimes (z_2,\ldots,z_n)
,
\frac{\sqrt{Q}}{1+az_1} H'_2(z_2,\ldots,z_n)
\Biggr)
\end{split}
\end{equation}
for exactly one pair $a$ and $Q$, where $a > 0$,
\begin{equation}
\label{eq:upperbounda}
a \leq
\sqrt{
\frac{
br^4+(b^2-b+1)r^2-b+1
-2r\sqrt{(b-1)b(1+r^2)(br^2-b+1)}
}{
b^2r^6+2br^4+r^2
}
}
\end{equation}
and $Q$ is either one of
\begin{multline}
\label{eq:solutionsQ}
Q=
\frac{
-a^2br^4+(2(1-a^2)b+a^2-1)r^2+b-1
}{
2(r^4+r^2)
}
\\
\pm
\frac{
\sqrt{
a^4r^4(br^2+1)^2
-2a^2r^2(br^2(r^2+b-1)+1+r^2-b)
+(1+r^2-b)^2
}
}{
2(r^4+r^2)
}
.
\end{multline}
Moreover, for each such pair $a$ and $Q$, a map exists.
\item
If $R = r$, then $N=\binom{n+1}{2}+n$ and
$f$ is unitarily equivalent to a map of the form
\eqref{eq:generaldeg2mapform}
for exactly one $a \in \left(0,\frac{1}{\sqrt{1+r^2}}\right)$
and $Q=\frac{1}{1+r^2}-a^2$.
Moreover, for each such $a$, a map exists.
\item
If $r < R < 1$, then $N=\binom{n+1}{2}+n$ and 
$f$ is unitarily equivalent to a map of the form
\eqref{eq:generaldeg2mapform}
for exactly one $a \in (0,1)$ where $Q$ is the larger
value of \eqref{eq:solutionsQ}.
Moreover, for each such $a$, a map exists.
\end{enumerate}
\end{theorem}

We observe that by part (v) we obtain maps where
the embedding dimension equals the target dimension and $R$ is arbitrarily
close to $1$.
It is not possible to do the same with
juxtapositions of homogeneous maps only.

The key part of the proof is to use \cref{lemma:quotientlemma}.  In the case
of degree 2 maps we quickly obtain the following first
step in the classification.

\begin{proposition}
Suppose $f = \frac{p}{q} \colon \A_{n,r} \to \A_{N,R}$ is a proper rational map of degree 2,
$n \geq 2$, take $q(0)=1$, and write $b=\frac{1-R^2}{1-r^2}$.  Then after a possible unitary
transformation on the source,
\begin{equation}
\label{eq:deg2form}
\norm{p(z)}^2
= \Bigl(1 + b \bigl(\norm{z}^2-1\bigr)\Bigr) \abs{1+az_1}^2 
+ Q \bigl(\norm{z}^2-1\bigr) \bigl(\norm{z}^2-r^2\bigr) ,
\end{equation}
where $0 \leq a < 1$ and $Q > 0$ are constants and these two
constants are the complete invariants for such maps under unitary equivalence.
\end{proposition}

\begin{proof}
We apply \cref{lemma:quotientlemma}, and as the degree is 2, we find that
$Q$ is constant and $q$ is
of degree 1 or 0.  We can take $q(0)=1$ and so, $q(z) = 1+ v \cdot z$
for some vector $v \in \C^n$.
If $v=0$, we are in the polynomial case and $q \equiv 1$.
Note that pre- or postcomposing with a unitary cannot change
whether the map $f$ is polynomial or rational.
In the rational case, that is, $v \not=0$,
we precompose with a unitary so that $v \cdot z = a z_1$ for some $a > 0$.
In the polynomial case, $v = 0$ and hence $v \cdot z = a z_1$ for $a=0$.
In either case, precomposing $f$ (and hence
$\norm{p(z)}^2$) with a unitary cannot change $a$ or $Q$ after this
normalization.
Postcomposing $f$ with a
unitary is the same as postcomposing $p$ with a unitary and this leaves
$\norm{p(z)}^2$ unchanged.  In other words, the normalized
polynomial $\norm{p(z)}^2$ is a complete invariant under unitary equivalence.
It is not difficult to
see that $\norm{p(z)}^2$ uniquely determines $Q$ and $a$ and hence they are
complete invariants under unitary equivalence.

As $f$ is a rational ball map, we know that it extends past the boundary and
hence $q$ cannot be zero on the closed unit ball, which means that $a < 1$.
By considering the second degree terms in $p$, that is,
$\norm{p_2(z)}^2 = b \norm{z}^2 \abs{a z_1}^2 + Q \norm{z}^4$, we find that
$Q \geq 0$ by setting $z_1=0$.  Moreover,
if $Q=0$, then every component in $p$ would be divisible by $q$, meaning
that $f$ is not in fact a degree 2 map but a degree 1 map.  So $Q > 0$.
\end{proof}

Therefore to complete the proof of the theorem, we simply need to find
conditions on $b$, $r$, $a$, and $Q$ so that the expression on the
right hand side of \cref{eq:deg2form} is a hermitian sum of squares.
Writing $\norm{p(z)}^2$ as a sum of hermitian squares is the same
as writing the hermitian matrix of coefficients as a sum of outer products
of vectors.  These vectors are the coefficients of the polynomial components
of $p(z)$.  In particular,
a real polynomial is a sum of hermitian squares if and only if
its hermitian matrix of coefficients is a positive semidefinite matrix.
If we are starting with the right hand side of \cref{eq:deg2form}
we can ensure that these vectors are linearly independent, which is
equivalent to $f$ not mapping into a subspace.  If the
components of the map are linearly independent, the rank of this matrix
of coefficients is precisely the target dimension $N$ of the map $f$.

If the map is polynomial, that is $a=0$, then we already
know from \cite{helal-2025-proper} that $f$ is unitarily equivalent
to a juxtaposition of homogeneous maps.  Since we are assuming $N=N_f$
in the theorem, none of these maps can be the constant.   Thus since the
map must be degree 2, it must be either equivalent to $H_2$ or
a juxtaposition $\sqrt{1-t} H_1 \oplus \sqrt{t} H_2$
for $t \in (0,1)$.  We have therefore finished items (i) and (ii)
of the theorem.
In what follows, we will mostly consider the rational case $a > 0$.

Next we also notice that it is sufficient to consider the case $n=2$.  This
is because the right hand side of \cref{eq:deg2form} is an
expression that is a function of $z_1$, $\bar{z}_1$ and
$\abs{z_2}^2+\cdots+\abs{z_n}^2$.  Therefore, if we find the normal form in
the case $n=2$, the larger dimensions will follow by replacing
$\abs{z_2}^2$ with 
$\abs{z_2}^2+\cdots+\abs{z_n}^2$ in the expression for $\norm{p(z)}^2$.

Let us then fix $n=2$ and write the matrix of coefficients of
the right hand side of \cref{eq:deg2form}.  We order the monomials
as $1,z_1,z_1^2,z_2,z_1z_2,z_2^2$.  With this ordering the matrix becomes
\begin{equation}
\begin{bmatrix}
(1-b) + Qr^2 & (1-b)a & 0 & 0 & 0 & 0 \\
(1-b)a & (1-b)a^2+b-Q(1+r^2) & ba & 0 & 0 & 0 \\
0 & ba & ba^2+Q & 0 & 0 & 0 \\
0 & 0 & 0 & b-Q(1+r^2) & ba & 0 \\
0 & 0 & 0 & ba & ba^2+2Q & 0 \\
0 & 0 & 0 & 0 & 0 & Q
\end{bmatrix} .
\end{equation}
That is, the $(k,\ell)$-th entry of this matrix is the coefficient
of the polynomial corresponding to the $\ell$th monomial in the ordering
times the conjugate of the $k$th monomial.

Call this coefficient matrix $M$.  Notice that we picked an ordering
of the monomials so that the matrix is a direct sum of three matrices,
$M = M_1 \oplus M_2 \oplus M_3$ for a $3 \times 3$ $M_1$, a $2 \times 2$
$M_2$, and $M_3 = [Q]$.  The three matrices $M_1$, $M_2$, and $M_3$
must be positive semidefinite.

Before we move on, we notice that given a
right hand side of \cref{eq:deg2form} for $n=2$, we can find
a $p(z)$ where all components of $p(z)$ that depend on $z_2$ are either
of the form $L(z_1) z_2$ for an affine $L(z_1)$, or the last
term $\sqrt{Q} \, z_2^2$.  That means that if we find a presentation
of a representative of the equivalence class of maps for $n=2$,
to get the corresponding map for $n > 2$ we simply replace
the multiplication by $z_2$ with tensoring by $(z_2,\ldots,z_n)$
and we replace $z_2^2$ by the symmetrized second tensor power of
$(z_2,\ldots,z_n)$, that is, by $H'_2(z_2,\ldots,z_n)$.  That is
where those terms in \cref{eq:generaldeg2mapform} come from.
From now on we therefore generally assume that $n=2$.

The block $M_3$ is trivial as $Q > 0$ and gives rise to
the term $\sqrt{Q}\, z_2^2$.  We will show that the matrix $M_2$
is always positive definite (nonsingular in fact) when $M_1$ is positive
semidefinite, and so it will not be an issue.

\begin{proposition}
For a degree 2 annulus map $f$, we have $Q \leq 1$ and $0 < b \leq 1+r^2$.
In particular, we have that $R \geq r^2$.
Furthermore, if $f$ is not polynomial, then
$Q < 1$ and $b < 1+r^2$ (so $R > r^2$).
\end{proposition}

\begin{proof}
Clearly $b > 0$.
If we consider the first diagonal entries of $M_1$ and $M_2$, 
we find that they must be
nonnegative, that is,
$1-b + Qr^2 \geq 0$ and 
$b- Q(1+r^2) \geq 0$.  Together this means that
$Q(1+r^2) \leq b \leq 1 + Qr^2$.  This implies that $Q \leq 1$
and $b \leq 1+r^2$.
Suppose now that $Q=1$.  Then $b=1+r^2$ and also the first diagonal entry
of $M_2$ is now $0$.  Thus for the matrix to be positive semidefinite
we require the off-diagonal entry $ba = 0$.  Since $b > 0$, we find that
$a=0$.  Thus if $a > 0$, we have that $Q < 1$.  Consequently $b < 1+r^2$.
\end{proof}

For the rest of the argument we will mostly consider the matrix
\begin{equation}
M_1 = 
\begin{bmatrix}
(1-b) + Qr^2 & (1-b)a & 0 \\
(1-b)a & (1-b)a^2+b-Q(1+r^2) & ba \\
0 & ba & ba^2+Q 
\end{bmatrix} .
\end{equation}
By setting $z_2=0$, we have that $M_1$ is a matrix of coefficients
corresponding to a proper annulus map with source dimension 1.
Assuming that $f$ does not map into a subspace, the matrix $M_1$ cannot
be zero (as not all components of $p$ can be divisible by $z_2$).
If the rank of $M_1$ is 1, then we find an annulus map from dimension 1
to dimension 1.  We also know that the numerator $p$ has to be
of degree $2$.  This map therefore has to be $e^{i\theta} z^2$, meaning
$Q=1$, $a=0$, and $b=1+r^2$.  As we are assuming that $a > 0$, this is not
a possibility.  Therefore, $M_1$ must be of rank 2 or 3.

\begin{proposition}
Suppose $p$ has linearly independent components and $a > 0$.
The target dimension $N$ of $f$ is equal to the embedding dimension $N_f$
if and only if $M_1$ is of rank 2.
\end{proposition}

\begin{proof}
We are left with $M_1$ being rank 2 or 3.  If $M_1$ is of rank 3 and hence
of full rank, then
we can subtract from $M_1$ a multiple of the matrix $A$ corresponding to
$\abs{1+az_1}^2$ and $M_1 - \epsilon A$ is still positive (semi)definite
for small enough $\epsilon > 0$.
Pick $\epsilon$ large enough so that 
$M_1 - \epsilon A$ is rank 2, which we decompose as a sum of outer products
as usual.  Thus we find a decomposition $\norm{p(z)}^2$ with linearly
independent components, where one of the components of $p$ is
$\sqrt{\epsilon}(1+az_1)$.  But that means that the corresponding
$f=\frac{p}{q}$ has a constant term, meaning that the embedding dimension
for $f$ is strictly smaller than $N$.

Conversely, suppose $M_1$ is rank 2 and the embedding dimension is strictly
smaller than $N$.  This means that after a unitary, we can make one of the
components of $f$ be a constant, meaning that $\norm{f(z_1,0,\ldots,0)}^2=\abs{f_1(z_1)}^2+c^2$, as only the $M_1$
block contributes terms and the $\abs{f_1(z_1)}^2$ comes from the second
eigenvalue of $M_1$.  Again, this means that $f_1$ is a proper map of annuli
in the complex plane and hence just a multiple of $z^2$ and this leads to a
contradiction if $a > 0$ as before.
\end{proof}

As we are assuming that $M_1$ is singular, we have the equation
\begin{multline}
0 = \det(M_1) =
-r^2(r^2+1)Q^3
\\
+
(-a^2br^4-2a^2br^2+2br^2+a^2r^2-r^2+b-1)Q^2
+
(1-a)(1+a)(1-b)b(1-ar)(ar+1)Q .
\end{multline}
This equation is divisible by $Q$.  The root at $Q=0$ is not interesting
as we are interested in $Q > 0$, so we can discard it and we are left
with a quadratic equation.  The two solutions to this quadratic equation are
written out in \cref{eq:solutionsQ}.
As a cubic in $Q$, $\det(M_1)$ has negative cubic term.  It therefore
goes to negative infinity as $Q \to \infty$.

It is now good to consider the cases $b < 1$ ($R > r$), $b=1$ ($R=r$), and
$b>1$ ($R < r$) separately.
We start with $b < 1$.  In this case, $1-b > 0$.  We note that
\begin{equation}
\left.\frac{\partial}{\partial Q}\right|_{Q=0}\det(M_1) =
(1-a)(1+a)(1-b)b(1-ar)(ar+1) > 0.
\end{equation}
This means that as a function of $Q$, $\det(M_1)$ must have a negative root
$Q_-$,
the aforementioned root at $0$, and a positive root $Q_{+}$.
By looking at the matrix $M_1$ when $a=0$, we find that it has 3 positive
eigenvalues when $Q > 0$ is small enough.  As the sign of an eigenvalue
can only change when the determinant goes through 0, we have that
$M_1$ is positive semidefinite for all $0 \leq Q \leq Q_{+}$.
For similar continuity reasons the same is true for all $0 < a < 1$.
As $\det(M_1)$ is negative for all $Q > Q_{+}$, the matrix $M_1$
cannot be positive semidefinite for $Q > Q_{+}$.

Next we still need to handle what happens at $Q=1$.  While we have shown
that for a map to exist, we must have $Q < 1$, what we want to show is that
$Q_+ < 1$ and hence a map always exists.  For this we plug in $Q=1$ into
$\det(M_1)$.
\begin{equation}
\det(M_1)|_{Q=1} =
-(a^2b+1)r^4
-(a^4b^2-a^2b^2-a^4b+3a^2b-2b-a^2+2)r^2
-(1-b)(a^2b+1-b) .
\end{equation}
That is, we have a polynomial in $r$.  It is clear that for the considered
values of $a \in (0,1)$ and $b \in (0,1)$ the coefficient
of $r^4$ and the constant are strictly negative.  The coefficient
of $r^2$ is also negative in this range which is a simple calculus exercise
as the expression has a strict minimum on the unit square at $a=0,b=1$.
By Descartes' rule of signs, there are no positive solutions $r$
for $\det(M_1)|_{Q=1} = 0$.  Therefore $\det(M_1)|_{Q=1}$ is always strictly
negative for the allowed $a$, $b$, and $r$, and hence $Q_+ < 1$.

We need to next show that for the range of $Q$ where $\det(M_1)$ is
nonnegative, we have that $\det(M_2) > 0$.  We have
\begin{equation}
\det(M_2) = (-2(r^2+1)Q + (2-a^2r^2-a^2)b) Q .
\end{equation}
This function has two roots, one at the origin and one positive, call it
$Q_{2,+}$.
It is easy to see that at $a=0$ the matrix is positive semidefinite
and since $\left.\frac{\partial}{\partial Q}\right|_{Q=0}\det(M_2) > 0$
we find that $M_2$ is positive semidefinite between the two roots.
We plug $Q=Q_{2,+}$ into $\det(M_1)$ to find
\begin{equation}
\frac{
a^2b^2(2-a^2r^2-a^2)\bigl((a^2r^6-a^2r^2-2r^2+2)b+(2a^2r^4-2r^4+2a^2r^2-2)\bigr)
}{
8(r^2+1)^2
} .
\end{equation}
The first two terms of the numerator are clearly strictly positive in our allowed
range.  It is also not hard to see by basic calculus that the third term of
the numerator is strictly negative (it grows as $b$ grows and is still negative at
$b=1$).

In other words, we have shown that at $Q=Q_+$, $M_2$ is still positive
definite.
 Hence, we have a coefficient matrix $M$ that is positive semidefinite
and writing the blocks of $M$ as sums of hermitian squares obtains the form
in the theorem. 
As $M_2$ is nonsingular and $M_1$ is rank 2,
it is not hard to check that the matrix $M$ is of full rank minus 1 for
any $n \geq 2$.
Hence we are finished with $b < 1$ and the item (v)
of the theorem.

Let us now consider $b=1$.  When $b=1$, we find that $\det(M_1)$ simplifies
greatly to
\begin{equation}
\bigl(r^2(1-a^2(1+r^2))-Qr^2(1+r^2)\bigr) Q^2 .
\end{equation}
There are two roots at $Q=0$ and one other root $Q_0 = \frac{1}{1+r^2} - a^2$.
If the root is negative then the determinant is negative for positive $Q$,
hence we only have a map if the root is positive.  It is positive
if and only if $\frac{1}{\sqrt{1+r^2}} > a$, which gives the restriction on
$a$ in item (iv) of the theorem.  It is also clear that $Q_0 < 1$.
Moreover, plugging $Q=Q_0$ into $\det(M_2)$ we obtain
\begin{equation}
a^2\bigl(1-a^2(1+r^2)\bigr) .
\end{equation}
Thus $\det(M_2)$ is strictly positive as are the two diagonal terms, and so 
the matrix $M_2$ is positive definite.  Item (iv) of the theorem now follows
by expanding $M$ in the same way as before.

Finally, we consider $1 < b < 1+r^2$.
The coefficient of $Q$ in $\det(M_1)$
is $(1-a)(1+a)(1-b)b(1-ar)(ar+1)$ and so we see that the root at $Q=0$ is
always a simple root.  If we plug in some sample values, we find that it
is possible to obtain positive roots.
Hence the two roots of $\det(M_1)=0$ are always positive or possibly not real.
When $a=0$, the matrix $M_1$ has two positive and one negative eigenvalues
for negative $Q$ and has roots $Q_1 = \frac{b-1}{r^2}$ and $Q_2 =
\frac{b}{1+r^2}$, which satisfy $0 < Q_1 < Q_2 < 1$.

It is not hard to see that $M_1|_{Q=1}$
is not positive semidefinite.
Otherwise the center term would be nonnegative: $(1-b)a^2+b-(1+r^2) \geq 0$.
Together with $b < 1+r^2$,
we would obtain $(1-b)a^2 > 0$,
which would contradict $1 < b$.
Therefore, as $a$ grows the two roots must stay less than $1$ until we
either reach $a=1$ or they
disappear if the discriminant becomes zero.  In this range,
as the matrix $M_1$ is positive semidefinite between the two roots at $a=0$
it is also positive semidefinite throughout.
To find if and where the two roots disappear, we consider the
discriminant of the quadratic $\det(M_1)/Q=0$:
\begin{equation} \label{eq:discriminantM1byQ}
a^4r^4(br^2+1)^2
-2a^2r^2(br^2(r^2+b-1)+1+r^2-b)
+(1+r^2-b)^2 .
\end{equation}
This expression is strictly positive when $a=0$ and at $a=1$ it is
$(1-b(1-r^4))^2 \geq 0$.  The expression can be considered a quadratic
in $a^2$ and it is concave up.  To show that it always has two roots between $0$ and $1$,
it is better to substitute $b=1+sr^2$ so that we have a parameter $s \in
(0,1)$ just like $a$ and $r$.  The minimum of this polynomial is
at
\begin{equation}
a^2 = \frac{r^4s^2+r^4s+r^2s-s+r^2+1}{(r^4s+r^2+1)^2} .
\end{equation}
It is a calculus exercise that for $s,r$ in $(0,1)$, the $a$ where
this minimum occurs is always in $(0,1)$.
We plug the location of this minimum into the discriminant
\cref{eq:discriminantM1byQ} (with $b=1+sr^2$) to obtain
\begin{equation}
\frac{-4r^6(r^2+1)s(r^2s+1)(r^2s-s+1)}{(r^4s+r^2+1)^2} .
\end{equation}
It is not hard to see that this minimum is then always negative.
Putting this all together means that the discriminant
\cref{eq:discriminantM1byQ} always has two positive roots,
both of these roots are within $(0,1]$, with the smaller one
strictly less than $1$.
Let $a_1$ and $a_2$ denote the two roots where $a_1$ is the smaller one.
The smaller root is given as the right
hand side in \cref{eq:upperbounda}. For $0 < a \leq a_1$,
we therefore obtain two roots $Q_1$ and $Q_2$ in $(0,1)$ to 
$\det(M_1)=0$ and between them the matrix $M_1$ is positive semidefinite,
which can be checked by plugging in a test point between the two roots and noting that by 
continuity the entire connected region given by
$Q_1 < Q < Q_2$, $0 < a < a_1$, $s \in (0,1)$, where
the determinant is strictly positive, we must get that
the matrix is positive definite.  Thus it is semidefinite when we allow
nonstrict inequalities.  Similarly we check the regions
$Q_1 < Q < Q_2$, $a_1 < a < 1$, $s \in (0,1)$ (there are two such regions
as $a_1 = 1$ if $b=\frac{1}{1-r^4}$ as we have seen above).
In both of these regions the matrix $M_1$ has a negative eigenvalue.

We are done with the matrix $M_1$.  It remains to show that $M_2$ is
positive definite if the parameters are within the stated limits.
This is not hard to see, if we delete the first row and first column of
$M_1$ we get a positive semidefinite matrix $M_1'$.  Then $M_2$ can be
obtained from $M_1'$ by adding strictly positive values to both diagonal
entries, hence $M_2$ is in fact positive definite.
Item number (iii) now follows.


\section{Other remarks on proper maps of annuli} \label[section]{sec:remarks}

Let us put together some further minor remarks.
Firstly, there are two other domains in $\C^n$ whose automorphism group is
$U(n)$.  There is the punctured ball $\B_n \setminus \{ 0 \}$ and
the complement of the closed unit ball $\C^n \setminus \overline{\B_n}$.
For maps from a punctured ball to a punctured ball, the maps must extend
through the origin by the Riemann extension theorem if $n=1$ or via the
Hartogs phenomenon if $n > 1$.  We are then simply dealing with proper maps
of balls that take the origin to the origin.  
Since the automorphism group of the ball is transitive,
every spherical equivalence class of proper maps of balls contains
a map taking $0$ to $0$.
The case of the complement of the closed unit ball was studied in
\cite{helal-2025-proper}.  In that case, for $n \geq 2$, the 
proper maps are polynomial sphere maps whose norm goes to infinity at
infinity.  In either case, the classification is different in these
degenerate cases, and in both cases there are more maps than for annuli.

Another remark we wish to make is that we have already seen that
in the first interval we have the restriction $r \leq R < 1$ for a map to
exist.  A question may arise about which $R$ admit a proper
map in general.
It is relatively simple to see that as long as we do not place any
restriction on $N$ we may get any $r$ and $R$ whatsoever.  That is, given
$n$, $r$, and $R$, pick $d$ so that $r^d \leq R$.  Then as $H_d$
takes the $r$-sphere to the $r^d$-sphere we have that
the map
\begin{equation}
z \mapsto \sqrt{\frac{1-R^2}{1-r^{2d}}} \, H_d(z) \oplus
\sqrt{\frac{R^2-r^{2d}}{1-r^{2d}}}
\end{equation}
is a proper map of $\A_{n,r}$ to $\A_{N,R}$
where $N = \binom{n+d-1}{d}+1$.

The previous discussion suggests that the existence of a rational proper
holomorphic map $f \colon \A_{n,r} \to \A_{N,R}$ of degree $d$ may imply a
relationship between $r$, $R$, and $d$.  For homogeneous maps
as above we have $r^d \leq R$, with $H_d$ itself achieving the equality
in this bound.  Juxtapositions (that is, weighted direct sums) of 
homogeneous maps, and of all the other maps the authors have constructed still
satisfy this bound, and so a reasonable question seems to be if 
$r^d \leq R$ (equivalently as a bound on degree, $d \geq \log_r R$)
holds in general.
In \cref{sec:seconddeg}, we show that all degree 2 maps satisfy
the bound $r^2 \leq R$, giving further support for this conjecture.
Moreover, we construct degree 2 maps with $r^2 < R < 1$
such that the target dimension $N$ is the embedding dimension.
That means that there is no simple upper bound on $R$ if the degree
$d$ is larger than 1.

Finally, we remark that given our results and our examples,
it is plausible to conjecture a degree bound in terms of the dimensions.
Some bound must exist by the theorem of Forstneri\v{c}, as annulus maps
extend to proper rational maps of balls.
However, in the annulus case, it appears that the
homogeneous map achieves the highest degree for a given set of dimensions.
That is, we conjecture that as long as $n \geq 2$, then
$N \geq \binom{n+d-1}{d}$; note that this is a bound for $d$,
but it does not have a nice formula.
All the other maps we know how to construct are either juxtapositions
of homogeneous maps or have lower degree than the homogeneous
for a given target dimension.


\bibliographystyle{amsplain}
\bibliography{ball_maps}

\end{document}